\documentclass[final,hidelinks]{siamart220329}
\usepackage[T1]{fontenc}
\usepackage{amssymb,bm}
\usepackage{booktabs}
\usepackage{arydshln}
\usepackage{placeins}
\usepackage{float}
\graphicspath{{./Figures/}}

\makeatletter
\def\compactdoi#1{\expandafter\compactdoiaux#1\@nil{#1}}
\def\compactdoiaux https://doi.org/#1\@nil#2{\href{#2}{\nolinkurl{doi:#1}}}
\renewcommand{\url}[1]{\compactdoi{#1}}
\makeatother

\newcommand{\ii}{\mathrm{i}}
\newcommand{\Eh}{\bm{E}_h}
\newcommand{\curl}{\nabla\times}
\newcommand{\Vh}{V_h}

\headers{A Three-Grid Preconditioner for Maxwell Equations}{Shubin Fu}
\begin{document}
	
	\title{A Fully Matrix-Free Three-Grid Preconditioner for the Time-Harmonic Maxwell Equations at Extreme Scale}
	\author{Shubin Fu\thanks{School of Mathematical Sciences, Eastern Institute of
		Technology, Ningbo, Zhejiang 315200, P. R. China
		(\email{sfu@eitech.edu.cn}).}}
	\maketitle
	
				\begin{abstract}
Three-dimensional time-harmonic Maxwell simulations generate massive complex indefinite systems whose mesh coarsening is strictly limited by phase accuracy. Although matrix-free finite element kernels utilize GPU throughput efficiently, standard multilevel solvers are ultimately bottlenecked by the memory and communication costs of exact coarse-grid factorizations. We present a fully matrix-free, factorization-free three-grid preconditioner for curl-conforming  N{\'e}delec discretizations with perfectly matched layers (PML) and optimally blended quadrature. The method employs an outer FGMRES to solve the unshifted fine-grid equation, while an intermediate-grid correction is computed by a fixed-work FGMRES preconditioned with a complex-shifted $2h$--$4h$ cycle. This strategically confines the complex shift to an auxiliary preconditioner, preserving the physical Maxwell operator. A local Fourier analysis derives the blended Maxwell branches and compatible edge transfers, identifying robust shift and Jacobi damping parameters. Validated against the analytical Maxwell Green tensor, our approach demonstrates extreme scalability: using a single solver configuration, both homogeneous and heterogeneous systems with approximately 10.89 billion complex edge unknowns are solved in 42.0--72.0 seconds on just 64 NVIDIA A100 GPUs.
				\end{abstract}
	
	\begin{keywords}
		time-harmonic Maxwell equations, N{\'e}delec finite elements,
		matrix-free methods, three-grid preconditioner, GPU computing
	\end{keywords}
	
	\begin{MSCcodes}
		65N30, 65N55, 65F10, 78M10
	\end{MSCcodes}
	
	\section{Introduction}
	
Time-harmonic Maxwell equations arise in radar scattering, antenna design, photonics, metamaterials, and electromagnetic imaging \cite{Monk2003,Tsuji2012,Chanaud2014}. When a three-dimensional domain spans many wavelengths, maintaining a fixed spatial resolution forces the number of unknowns to grow cubically with the propagation distance. The resulting linear systems are inherently challenging: the curl--curl operator possesses a massive gradient kernel, the wave term renders the transverse branch indefinite, and perfectly matched layer (PML) truncation introduces complex anisotropic coefficients \cite{ReviewBerenger1994,ReviewChewJinMichielssen1997,GopalakrishnanPasciakDemkowicz2004}.

To control accumulated phase error across long propagation paths without artificially inflating the system size, high-frequency Maxwell simulations must balance dispersion accuracy with linear solver efficiency. In this work, we adopt curl-conforming  N{\'e}delec finite elements, which strictly preserve tangential continuity and the underlying de Rham complex \cite{Nedelec1980,Monk2003}. Furthermore, we employ optimally blended quadrature to suppress dispersion without introducing auxiliary unknowns or altering the finite element space \cite{AinsworthWajid2010,Wajid2012}. Although this discretization yields superior phase fidelity, it creates a severe numerical bottleneck: how to efficiently invert the resulting enormous, complex indefinite system at extreme scales.

Existing linear solvers struggle to reconcile algorithmic scalability with high parallel throughput on modern computing architectures. Sparse direct solvers provide exceptional robustness, but their memory fill-in, arithmetic complexity, and communication overhead grow superlinearly in three dimensions. While hierarchical compression techniques mitigate these demands \cite{ReviewBanjaiHackbusch2008,ReviewEngquistYing2011HMat}, they do not eliminate the global factorization bottleneck. Domain-decomposition and sweeping preconditioners \cite{IntroEngquistYing2011PML,ReviewStolk2013,ReviewVionGeuzaine2014,ReviewZepedaNunezDemanet2016,Tsuji2012} achieve low iteration counts by sweeping wave updates across subdomains, and their parallel extensions expose notable concurrency \cite{IntroPoulson2013,ReviewLiuYing2016Additive,ReviewLiuYing2016Recursive,ReviewStolk2017}. Nevertheless, their intrinsic directional dependencies, factorization storage, and subproblem synchronizations remain poorly aligned with the massive matrix-free parallel throughput of GPUs.

Multilevel geometric methods provide global coarse-grid communication without directional sweeping, making them naturally suited for GPU execution. Parallel geometric multigrid has successfully reached 1.3 billion unknowns for time-harmonic Maxwell systems \cite{Chanaud2014}. However, standard multigrid frameworks still rely on assembling and factorizing the coarsest operator, creating an unavoidable bottleneck that eventually stalls scaling. Although recent advances have further enriched compatible block, anisotropic, and cascadic components \cite{SyedFarquharsonMacLachlan2020,SunZhangChenFengHu2022,KinnewigRothWick2021,WangPanWu2025}, coarse-grid factorization remains the primary obstacle to achieving extreme-scale computations.

While complex-shifted operators can stabilize multigrid cycles for indefinite wave equations \cite{ReviewErlanggaVuikOosterlee2004,ReviewErlanggaOosterleeVuik2006,ReviewCoolsVanroose2013}, a shift fundamentally alters the physical spectrum. Resolving this tension for the full 3D vector Maxwell system---where the $H(\mathrm{curl})$ gradient kernel and PML anisotropy compound solver instability---demands a novel architectural design. In this paper, we establish a fully matrix-free and factorization-free three-grid preconditioner specifically tailored for curl-conforming Maxwell discretizations. Instead of substituting the physical Maxwell operator or relying on exact coarse factorizations \cite{Pinel2010,Calandra2013}, our solver strategically decouples the physical problem from an auxiliary shifted hierarchy.

Specifically, an outer FGMRES targets the unshifted fine-grid system, while its intermediate-grid correction is computed by an inner FGMRES preconditioned with a complex-shifted $2h$--$4h$ cycle. The complex shift is thus strictly confined to an auxiliary preconditioner, leaving the primary physical operator untouched. Supported by a comprehensive local Fourier analysis (LFA), we derive the blended Maxwell branches and compatible edge transfers, prove harmonic closure, and identify robust parameter ranges for the shift and Jacobi damping. Because every level operates strictly through geometric edge transfers and fixed-work Krylov smoothers, the memory footprint remains strictly proportional to the number of edge unknowns, eliminating factorization storage and inner convergence checks entirely.

The resulting algorithm demonstrates unprecedented performance and scalability. Managed by a single, uniform solver configuration, our implementation tackles homogeneous, converging-lens, periodic, and random isotropic media on up to 64 NVIDIA A100-40G GPUs. The solver successfully handles systems containing approximately 10.89 billion complex edge unknowns, completing the solves in just 42.0 to 72.0 seconds with matrix-free setup times under 0.8 seconds and peak memory below 37.7 GiB per GPU. To the best of our knowledge, these calculations represent the largest reported 3D time-harmonic Maxwell simulations utilizing curl-conforming finite elements \cite{Chanaud2014}.

The remainder of this paper is organized as follows. Section~2 formulates the PML-truncated Maxwell problem and its optimally blended  N{\'e}delec discretization. Section~3 presents the algorithmic structure of the matrix-free three-grid preconditioner. Section~4 details the local Fourier analysis and parameter characterization. Sections~5 and~6 present numerical validation against analytical Green's tensors and large-scale GPU performance benchmark results, respectively.

	\section{Maxwell Problem and Discretization}
	\label{sec:maxwell-discretization}

In this section, we formulate the time-harmonic Maxwell problem with Cartesian PML truncation and establish its discrete counterpart using curl-conforming  N{\'e}delec finite elements with optimally blended quadrature. 
	\subsection{Radiation problem}

		Following \cite{Monk2003}, let $\omega>0$ and adopt the time convention
		$\exp(-\ii\omega t)$.  The electric field $\bm{E}$ and magnetic field $\bm{H}$
		generated by an impressed current $\bm{J}$ satisfy
	\[
	\begin{aligned}
		\curl\bm{E} &= \ii\omega\mu_0\mu_r\bm{H},\\
		\curl\bm{H} &= -\ii\omega\epsilon_0\epsilon_r\bm{E}+\bm{J}
	\end{aligned}
	\qquad\text{in }\mathbb{R}^3.
	\]
	Here $\epsilon_0$ and $\mu_0$ are the free-space constants, and
	$\epsilon_r(\bm{x})$ and $\mu_r(\bm{x})$ are the relative permittivity and
	permeability.  The media considered below are isotropic in the physical domain,
	so these coefficients are scalar functions before the PML transformation.
	Eliminating $\bm{H}$ gives the electric-field equation
	\begin{equation}
		\curl\bigl(\mu_r^{-1}\curl\bm{E}\bigr)
		-k_0^2\epsilon_r\bm{E}=\bm{f},
		\qquad
		k_0=\omega\sqrt{\epsilon_0\mu_0},
		\qquad
		\bm{f}=\ii\omega\mu_0\bm{J}.
		\label{eq:maxwell-electric-exterior}
	\end{equation}
	In a homogeneous exterior, the outgoing solution is selected by the
	Silver--Muller condition
	\begin{equation}
		\lim_{r\to\infty}
		r\bigl(\bm{H}\times\widehat{\bm r}-\eta_0^{-1}\bm{E}\bigr)=0,
		\qquad
		r=|\bm{x}|,
		\quad
		\widehat{\bm r}=\bm{x}/r,
		\quad
		\eta_0=\sqrt{\mu_0/\epsilon_0}.
		\label{eq:silver-muller}
	\end{equation}
	To obtain a finite computational problem while retaining the outgoing solution
	in $\Omega_{\rm ph}$, we replace the condition at infinity by a Cartesian PML\@.

	\subsection{Cartesian PML truncation}

	Let $\Omega_{\rm ph}$ denote the physical domain and let the rectangular box
	$\Omega\supset\Omega_{\rm ph}$ include a Cartesian PML\@.  For the Cartesian
	coordinate $x_j$, $j\in\{1,2,3\}$, introduce the complex stretch
	\begin{equation}
		\widetilde x_j(x_j)=\int_0^{x_j}\widetilde s_j(t)\,dt,
		\qquad
		\widetilde s_j(x_j)=1+\ii\zeta_j(x_j),
		\label{eq:pml-stretch}
	\end{equation}
	where $\zeta_j=0$ in $\Omega_{\rm ph}$ and $\zeta_j>0$ in the
	corresponding PML\@.  We use the quadratic profile
	\begin{equation}
		\zeta_j(x_j)
		=
		\zeta_{j,\max}
		\left(\frac{d_j(x_j)}{d_{\rm pml}}\right)^2,
		\label{eq:pml-profile}
	\end{equation}
	where $d_j(x_j)$ is the distance into the layer and $d_{\rm pml}$ is
	its thickness.  With the adopted time convention, this choice attenuates an
	outgoing wave as it crosses the layer.

	Writing $(\widetilde s_x,\widetilde s_y,\widetilde s_z)
	=(\widetilde s_1,\widetilde s_2,\widetilde s_3)$, the coordinate transformation can be
	represented on the real domain by
	complex anisotropic material tensors
	\cite{ReviewBerenger1994,ReviewChewJinMichielssen1997,Tsuji2012}.  For an
	isotropic physical medium, set
	\begin{equation}
		S=\operatorname{diag}\left(
		\frac{\widetilde s_y\widetilde s_z}{\widetilde s_x},
		\frac{\widetilde s_x\widetilde s_z}{\widetilde s_y},
		\frac{\widetilde s_x\widetilde s_y}{\widetilde s_z}
		\right),
		\qquad
		\widetilde\epsilon_r=\epsilon_r S,
		\qquad
		\widetilde\mu_r=\mu_r S.
		\label{eq:pml-tensors}
	\end{equation}
	The truncated electric-field problem is therefore
	\begin{equation}
	\begin{aligned}
		\curl\bigl(\widetilde\mu_r^{-1}\curl\bm{E}\bigr)
		-k_0^2\widetilde\epsilon_r\bm{E}
		&=\bm{f} &&\text{in }\Omega,\\
		\bm{n}\times\bm{E}&=0 &&\text{on }\partial\Omega.
	\end{aligned}
		\label{eq:maxwell-pml-strong}
	\end{equation}
		Here $\bm n$ is the outward unit normal to $\partial\Omega$.  The outer
		perfect-conductor condition closes the finite PML box.  Since
	$S=I_3$ in $\Omega_{\rm ph}$, no boundary condition is introduced at the
	physical--PML interface.  Equation~\eqref{eq:maxwell-pml-strong} therefore
	provides a finite-domain problem with the same curl structure as the original
	Maxwell equation, and it leads directly to the weak formulation used below.

	\subsection{Variational formulation}

	We now formulate the PML-truncated problem in its natural energy space.  Define
	\[
		H_0(\mathrm{curl};\Omega)
		=
		\{\bm{v}\in L^2(\Omega)^3:
		\curl\bm{v}\in L^2(\Omega)^3,\ 
		\bm{n}\times\bm{v}=0\text{ on }\partial\Omega\}.
	\]
	With $(\bm{u},\bm{v})_\Omega
	=\int_\Omega\bm{u}\cdot\overline{\bm{v}}\,d\bm{x}$, the weak problem is to
	find $\bm{E}\in H_0(\mathrm{curl};\Omega)$ such that
	\begin{equation}
		a(\bm{E},\bm{v})
		:=
		(\widetilde\mu_r^{-1}\curl\bm{E},\curl\bm{v})_\Omega
		-k_0^2(\widetilde\epsilon_r\bm{E},\bm{v})_\Omega
		=(\bm{f},\bm{v})_\Omega
		\label{eq:maxwell-pml-weak}
	\end{equation}
	for every $\bm{v}\in H_0(\mathrm{curl};\Omega)$.  The negative mass term makes this
	problem indefinite at high frequency, while the PML tensors make it complex
	and non-Hermitian.  Its finite element approximation must therefore conform to
	$H_0(\mathrm{curl};\Omega)$ and retain the complex coefficients in
	\eqref{eq:maxwell-pml-weak}.  We now specify the discrete space and the
	quadrature used to evaluate this form.

	\subsection{Lowest-order N{\'e}delec discretization}

	Let $\mathcal{T}_h$ be a Cartesian hexahedral mesh of $\Omega$.  We discretize
	the electric field with the lowest-order first-family N{\'e}delec element
	\cite{Nedelec1980,Monk2003}, whose interelement continuity agrees with the
	tangential trace of $H(\mathrm{curl})$.
	
	On the reference cube $\widehat K=(-1,1)^3$, the local space is
	\begin{equation}
		\widehat V
		=Q_{0,1,1}\bm{e}_x
		\oplus Q_{1,0,1}\bm{e}_y
		\oplus Q_{1,1,0}\bm{e}_z,
		\label{eq:nedelec-local-space}
	\end{equation}
	where $Q_{a,b,c}$ contains polynomials of coordinatewise degrees at most
	$a$, $b$, and $c$.  Its twelve canonical degrees of freedom are the oriented
	edge moments
	\begin{equation}
			\ell_e(\bm{v})=\int_e\bm{v}\cdot\bm{t}_e\,ds.
			\label{eq:nedelec-edge-dof}
		\end{equation}
		Here $\bm t_e$ is the unit tangent consistent with the chosen orientation of
		edge $e$.
	For an element map $F_K:\widehat K\to K$ with Jacobian $J_K$, the covariant
	transformation
	\begin{equation}
		\bm{\psi}(\bm{x})
		=J_K^{-T}\widehat{\bm{\psi}}(\widehat{\bm{x}}),
		\qquad
		\bm{x}=F_K(\widehat{\bm{x}}),
		\label{eq:nedelec-covariant-map}
	\end{equation}
	preserves these moments and hence tangential continuity across element faces.
	A consistent global edge orientation gives the conforming space
	$\Vh\subset H_0(\mathrm{curl};\Omega)$.
	
	Let $\{\bm{\phi}_i\}$ be the global edge basis used for the coefficient
	representation; its levelwise normalization is specified in
	Section~\ref{sec:grid-hierarchy}.  Write
	$\Eh=\sum_i U_i\bm{\phi}_i$.  If $a_h$ denotes the quadrature approximation of
	\eqref{eq:maxwell-pml-weak}, then the discrete system is
	\begin{equation}
		A_h\bm{U}_h=\bm{F}_h,
		\qquad
		(A_h)_{ij}=a_h(\bm{\phi}_j,\bm{\phi}_i),
		\qquad
		(\bm{F}_h)_i=(\bm{f},\bm{\phi}_i)_\Omega.
		\label{eq:discrete-maxwell-system}
	\end{equation}
	The negative mass term and the complex PML tensors make $A_h$ indefinite and
	complex symmetric.
	
	\subsection{Optimally blended quadrature}
	\label{sec:blended-quadrature}

	Having fixed the $H(\mathrm{curl})$ space, the remaining discretization choice is the
	evaluation of the two terms in $a(\cdot,\cdot)$.  We use the lowest-order
	optimally blended finite-spectral quadrature derived for the time-harmonic
	Maxwell equations on tensor-product grids
	\cite{AinsworthWajid2010,Wajid2012}.  For polynomial order $p=1$, the optimal
	blending parameter is $p/(p+1)=1/2$.  The blend is implemented directly,
	without assembling and averaging two element matrices, by the nonstandard
	two-point rule
	\begin{equation}
		\widehat x_\pm=\pm\sqrt{\frac{2}{3}},
		\qquad
		w_+=w_-=1
		\label{eq:blended-rule-1d}
	\end{equation}
	on $[-1,1]$.  Element integrals on $\widehat K$ use the tensor product of
	\eqref{eq:blended-rule-1d}; neither the finite element space nor its degrees of
	freedom are changed.
	
	The dispersion expansion explains this choice \cite{Wajid2012}.  Consider a
	uniform Cartesian grid in three dimensions.  Let $\bm{\xi}$ be the wave vector,
	$h_j$ the spacing in direction $j$, and $h=\max_j h_j$.  Denote the
	transverse branch under Gauss and blended quadrature by
	$\lambda_{h,{\rm G}}(\bm\xi)$ and
	$\lambda_{h,{\rm blend}}(\bm\xi)$, respectively.
	Standard Gauss integration gives
	\begin{equation}
		\lambda_{h,{\rm G}}(\bm{\xi})
		=|\bm{\xi}|^2
		+\frac{1}{12}\sum_{j=1}^3 h_j^2\xi_j^4
		+O(h^4|\bm{\xi}|^6),
		\label{eq:dispersion-gauss}
	\end{equation}
	whereas the optimally blended rule cancels the quadratic term and yields
	\begin{equation}
		\lambda_{h,{\rm blend}}(\bm{\xi})
		=|\bm{\xi}|^2
		-\frac{1}{240}\sum_{j=1}^3 h_j^4\xi_j^6
		+O(h^6|\bm{\xi}|^8).
		\label{eq:dispersion-blended}
	\end{equation}
		Thus the leading dispersion error is fourth rather than second order, without
		increasing the number of unknowns.  We use the same element rule in the
		heterogeneous physical region and for the complex PML coefficients.  Repeating
	this construction on the $h$, $2h$, and $4h$ meshes produces the three physical
	Maxwell operators used by the preconditioner in the next section.
	
	\section{Three-Grid Preconditioner}
	\label{sec:three-grid}

In this section, we present the fully matrix-free three-grid preconditioner for \eqref{eq:discrete-maxwell-system}. The preconditioner couples an unshifted physical hierarchy on the $h$ and $2h$ meshes with a shifted auxiliary cycle on the $2h$ and $4h$ meshes.

	\subsection{Grid hierarchy and operators}
	\label{sec:grid-hierarchy}

	Let $V_h\supset V_{2h}\supset V_{4h}$ denote the lowest-order N{\'e}delec spaces on
	three nested hexahedral meshes.  The mesh size is doubled in each coordinate
	direction between consecutive levels.  On a level
	$\ell\in\{h,2h,4h\}$, the discretization of the PML problem is written
	\begin{equation}
		A_\ell=C_\ell-k_0^2M_\ell,
		\label{eq:physical-level-operator}
	\end{equation}
	where $C_\ell$ and $M_\ell$ are, respectively, the discrete curl--curl and mass
	operators.  Each $A_\ell$ is obtained by rediscretizing the same variational
	problem with the blended rule of Section~\ref{sec:blended-quadrature}; the coarse
	operators are therefore
	not defined by a Galerkin product with the fine-grid matrix.

	We write $P_h:V_{2h}\rightarrow V_h$ and
	$P_{2h}:V_{4h}\rightarrow V_{2h}$ for the canonical edge-element
	interpolations \cite{Chanaud2014}.  To represent these maps, let $\bm\psi_e$ be the basis
	function dual to the edge moment $\ell_e$ and use the equivalent coordinates
	\begin{equation}
		U_e=|e|^{-1}\ell_e(\bm E),
		\qquad
		\bm\phi_e=|e|\bm\psi_e.
		\label{eq:edge-average-coordinates}
	\end{equation}
	Here $U_e$ is the component of the level coefficient vector associated with
	edge $e$.
	This diagonal change of basis leaves the finite element field unchanged and is
	used on all three levels.
	In these coordinates, prolongation copies the coefficient in the edge direction
	and interpolates linearly in the two transverse directions.  On the structured
	meshes considered here, we use the same symbols for these coordinate matrices
	and define the algorithmic restrictions by their Euclidean adjoints,
	$R_h=P_h^*$ and $R_{2h}=P_{2h}^*$, where $^*$ denotes conjugate transpose.
	Both maps are applied geometrically, without forming global transfer matrices.

	The auxiliary hierarchy is introduced only on the two coarser levels.  For
	$\ell\in\{2h,4h\}$, let
	\begin{equation}
		B_\ell
		=
		C_\ell-(1+\ii\sigma)k_0^2M_\ell,
		\qquad \sigma>0.
		\label{eq:shifted-level-operator}
	\end{equation}
	The added imaginary part damps the underresolved components that impede a
	standard coarse-grid iteration.  In particular, neither the fine-grid equation
	$A_h\bm U_h=\bm F_h$ nor the intermediate-grid equation involving $A_{2h}$ is
	replaced by its shifted counterpart.

	\subsection{Intermediate-grid correction}

	The intermediate-grid correction approximates the physical equation
	$A_{2h}\bm e_{2h}=\bm r_{2h}$ by a fixed-work inner FGMRES process.  Its
	preconditioner is one V-cycle for the shifted operators $B_{2h}$ and $B_{4h}$;
	the Krylov operator itself therefore remains unshifted.  To state this auxiliary
	cycle compactly, for $\ell\in\{2h,4h\}$ we let
	$\mathcal G_\ell(L,\bm g;\bm x_0)$ denote the output of a prescribed number of
	GMRES steps for $L\bm x=\bm g$, initialized by $\bm x_0$ and
	right-preconditioned by a fixed number of damped Jacobi sweeps.
	For a residual $\bm q_{2h}\in V_{2h}$, the shifted cycle is the map
	$\mathcal V_{2h}^{\sigma}:V_{2h}\rightarrow V_{2h}$ defined by
	\begin{equation}
	\begin{aligned}
		\bm y_{2h}
		&=\mathcal G_{2h}(B_{2h},\bm q_{2h};\bm 0),\\
		\bm q_{4h}
		&=R_{2h}\bigl(\bm q_{2h}-B_{2h}\bm y_{2h}\bigr),\\
		\bm e_{4h}
		&=\mathcal G_{4h}(B_{4h},\bm q_{4h};\bm 0),\\
		\mathcal V_{2h}^{\sigma}(\bm q_{2h})
		&=\mathcal G_{2h}
		\bigl(B_{2h},\bm q_{2h};
		\bm y_{2h}+P_{2h}\bm e_{4h}\bigr).
	\end{aligned}
		\label{eq:shifted-vcycle}
	\end{equation}
	Thus the bottom equation is also treated iteratively; no sparse factorization
	is required on the $4h$ grid.  Using $\mathcal V_{2h}^{\sigma}$ as a right
	preconditioner, we denote the fixed-work inner FGMRES approximation by
	\begin{equation}
		\bm e_{2h}
		=
		\mathcal K_{2h}
		\bigl(A_{2h},\bm r_{2h};\mathcal V_{2h}^{\sigma}\bigr)
		\label{eq:middle-krylov-map}
	\end{equation}
	where the iteration is initialized by zero.  Equation~\eqref{eq:middle-krylov-map} is the point
	at which the physical and auxiliary hierarchies interact: the residual and
	Krylov operator are unshifted, while the preconditioner for that Krylov process
	is shifted.

	\subsection{Three-grid cycle}

	The fine-grid pre- and post-smoothers use the analogous fixed-work map
	$\mathcal G_h(L,\bm g;\bm x_0)$.  The work counts and damping factors used for
	all three levels are given in Section~\ref{sec:experimental-protocol}.

	With the preceding notation, one application of the three-grid preconditioner
	$\mathcal M_h^{-1}$ is given below.  It combines smoothing on the
	physical fine-grid operator with an inexact correction from the physical
	intermediate-grid operator.

	\begin{center}
		\begin{minipage}{0.96\textwidth}
			\small
			\hrule
			\vspace{0.2em}
			\noindent\textbf{Algorithm 1} Application of the three-grid
			preconditioner $\bm z_h=\mathcal M_h^{-1}\bm r_h$.
			\vspace{0.2em}
			\hrule
			\vspace{0.15em}
			\begin{enumerate}
				\renewcommand{\labelenumi}{\arabic{enumi}:}
				\setlength{\itemsep}{0.12em}
				\setlength{\parsep}{0pt}
				\setlength{\parskip}{0pt}
				\item Pre-smooth on the fine grid:
				$\bm z_h^{\rm pre}=\mathcal G_h(A_h,\bm r_h;\bm 0)$.
				\item Restrict the fine-grid residual:
				$\bm r_{2h}=R_h(\bm r_h-A_h\bm z_h^{\rm pre})$.
				\item Compute the inexact intermediate-grid correction:
				$\bm e_{2h}=\mathcal K_{2h}
				(A_{2h},\bm r_{2h};\mathcal V_{2h}^{\sigma})$.
				\item Correct on the fine grid:
				$\widetilde{\bm z}_h=\bm z_h^{\rm pre}+P_h\bm e_{2h}$.
				\item Post-smooth on the fine grid:
				$\bm z_h=\mathcal G_h(A_h,\bm r_h;\widetilde{\bm z}_h)$.
			\end{enumerate}
			\vspace{0.05em}
			\hrule
		\end{minipage}
	\end{center}

	Because the fixed GMRES maps construct residual-dependent Krylov spaces,
	$\mathcal M_h^{-1}$ is not regarded as a stationary linear operator.  We
	therefore apply restarted FGMRES \cite{Saad1993} to the original system
	\begin{equation}
		A_h\bm U_h=\bm F_h,
		\label{eq:outer-unshifted-system}
	\end{equation}
	using $\mathcal M_h^{-1}$ as a right preconditioner.  Its restart length and
	stopping tolerance are external solver parameters specified in
	Sections~\ref{sec:numerical-validation} and~\ref{sec:experimental-protocol}.
	The outer stopping test and every residual reported below are consequently based on the unshifted
	operator $A_h$.  Equations~\eqref{eq:shifted-vcycle} and
	\eqref{eq:middle-krylov-map} also show why the construction remains fully
	matrix-free: every level requires only operator actions, diagonal Jacobi data,
	and geometric transfers, while the bottom correction is iterative rather than
	direct.

\section{Local Fourier Analysis}
\label{sec:maxwell-lfa}
We now employ a local Fourier analysis (LFA) to mathematically characterize the regularization supplied by the shifted auxiliary cycle. Throughout this section, the positive wavenumber $k_0>0$ is fixed. By lifting the gradient and transverse Maxwell branches to the harmonic space of the rediscretized hierarchy, we first derive exact symbol identities. A sampled finite-work analysis is then utilized to guide the selection of the shift and Jacobi damping parameters, clarifying the distinct spectral roles of the three grid levels.
	\subsection{Single-level Maxwell symbol}
	\label{sec:lfa-single-level}

	The multilevel analysis requires the symbol of the rediscretized Maxwell
	operator and its invariant branches.  Because the same discretization is used
	on all three levels, their symbols differ only through the mesh spacing, and it
	suffices to derive the $h$-grid symbol.  We consider the translation-invariant operator with
	$\epsilon_r=\mu_r=1$ on an infinite cubic grid.  For an edge orientation
	$r\in\{x,y,z\}$ and an index $\bm j\in\mathbb Z^3$, write an edge-average
	Fourier mode as
	\begin{equation}
		U_r(\bm j)=\widehat U_r
		\exp\!\left(\ii\bm\theta\cdot
		\left(\bm j+\tfrac12\bm e_r\right)\right),
		\qquad \bm\theta\in(-\pi,\pi]^3.
		\label{eq:lfa-edge-fourier-mode}
	\end{equation}
	Here $\bm e_r$ is the coordinate unit vector in the edge direction, and
	$\bm\theta$ is dimensionless.  Let $\tau\in[0,1]$ denote the blend
	parameter and define
	\begin{equation}
		a_\tau=\frac{2+\tau}{3},\qquad
		b_\tau=\frac{1-\tau}{3},\qquad
		q_j(\theta_j)=a_\tau+b_\tau\cos\theta_j,\qquad
		\delta_j=2\ii\sin(\theta_j/2).
		\label{eq:lfa-one-dimensional-symbols}
	\end{equation}
	The standard Gauss and Gauss--Lobatto rules correspond to $\tau=0$ and
	$\tau=1$, respectively.  The nonstandard rule in
	\eqref{eq:blended-rule-1d} corresponds to $\tau=1/2$, for which
	$q_j=(5+\cos\theta_j)/6$.  In the Fourier basis associated with the three edge
	orientations, the discrete curl symbol is
	\begin{equation}
		\widehat{\mathcal C}(\bm\theta)=
		\begin{bmatrix}
			0&-\delta_z&\delta_y\\
			\delta_z&0&-\delta_x\\
			-\delta_y&\delta_x&0
		\end{bmatrix}.
		\label{eq:lfa-curl-symbol}
	\end{equation}
	Here $\bm\delta=(\delta_x,\delta_y,\delta_z)^T$.  The complete edge symbol and
	its three Maxwell branches are given next.

	\begin{proposition}[Blended edge symbol]
	\label{prop:lfa-edge-symbol}
	Let $Q_f=\operatorname{diag}(q_x,q_y,q_z)$ and
	$Q_e=\operatorname{diag}(q_y q_z,q_x q_z,q_x q_y)$.  In the edge-average
	coordinates \eqref{eq:edge-average-coordinates},
	\begin{equation}
	\begin{aligned}
		\widehat C_h(\bm\theta)
		&=h\widehat{\mathcal C}(\bm\theta)^*Q_f
		\widehat{\mathcal C}(\bm\theta),\\
		\widehat M_h(\bm\theta)&=h^3Q_e,\\
		\widehat A_h^\sigma(\bm\theta)
		&=\widehat C_h(\bm\theta)
		-(1+\ii\sigma)k_0^2\widehat M_h(\bm\theta).
	\end{aligned}
		\label{eq:lfa-maxwell-symbol}
	\end{equation}
	Thus $\widehat A_\ell=\widehat A_\ell^0$ denotes the physical symbol, while
	$\widehat B_\ell=\widehat A_\ell^\sigma$ denotes the shifted auxiliary symbol
	for $\ell\in\{2h,4h\}$, consistently with
	\eqref{eq:physical-level-operator}--\eqref{eq:shifted-level-operator}.
	The generalized eigenvalues of $(\widehat C_h,\widehat M_h)$ are
	\begin{equation}
		0,\qquad \lambda_h(\bm\theta),\qquad \lambda_h(\bm\theta),
		\qquad
			\lambda_h(\bm\theta)
			=\frac{1}{h^2}\sum_{j=1}^3
			\frac{4\sin^2(\theta_j/2)}{q_j(\theta_j)}\geq0.
		\label{eq:lfa-maxwell-branches}
	\end{equation}
	Consequently, the generalized eigenvalues of
	$(\widehat A_h,\widehat M_h)$, with
	$\widehat A_h=\widehat A_h^0$, are $-k_0^2$ on the gradient branch and
	$\lambda_h(\bm\theta)-k_0^2$ on each of the two transverse branches.
	\end{proposition}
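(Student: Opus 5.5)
Our plan is to exploit the tensor-product structure of the lowest-order N\'edelec element on a Cartesian grid together with the tensor-product quadrature \eqref{eq:blended-rule-1d}. We first verify that the one-dimensional blended rule reproduces the $\tau$-family of one-dimensional mass symbols. Let $\varphi$ be the nodal hat function on a uniform 1D grid. The rule with nodes $\pm\sqrt{2/3}$ applied elementwise to $\varphi_i\varphi_{i'}$ gives a consistent mass stencil. Its Fourier symbol, in units of $h$, is $a_\tau+b_\tau\cos\theta$ with $\tau=1/2$: the diagonal entry is $2\cdot\tfrac12(\tfrac{1}{4}(1+\sqrt{2/3})^2+\tfrac14(1-\sqrt{2/3})^2)=5/6$ and the off-diagonal entry is $1/12$. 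Gauss and Lobatto give the endpoints $\tau=0,1$. The piecewise-constant factor of an edge function is integrated exactly by any rule with unit weights, giving a factor $h$ with symbol $1$.

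Next we assemble the mass symbol. In edge-average coordinates \eqref{eq:edge-average-coordinates}, the $x$-edge field is $\bm\phi_e=\chi(x)\varphi(y)\varphi(z)\bm e_x$ up to scaling, where $\chi$ is the cell indicator. Its mass form therefore factors into $h\cdot hq_y\cdot hq_z$. Distinct orientations are orthogonal because the components are orthogonal. With the half-index phase in \eqref{eq:lfa-edge-fourier-mode}, the piecewise-constant direction contributes exactly $1$. This yields $\widehat M_h=h^3Q_e$.

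For the curl we use the de Rham structure. The curl of an edge function lies in the face (Raviart--Thomas) space, and the incidence matrix, conjugated to Fourier space with the staggered phases, is exactly $\widehat{\mathcal C}(\bm\theta)$ with $\delta_j=e^{\ii\theta_j/2}-e^{-\ii\theta_j/2}$. The factors of $h$ from the edge lengths and face areas combine in these coordinates. A face function normal to $x$ is constant in $y,z$ and a hat in $x$, so the face mass is $h^3\cdot h^{-2}\cdot q_x$. This gives $\widehat C_h=h\widehat{\mathcal C}^*Q_f\widehat{\mathcal C}$. Keeping track of the powers of $h$ and the phase conventions is the bookkeeping step most likely to go wrong, and we would check it against the one-cell element matrix.

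The spectral claim is the main algebraic step. We would rescale by setting $D=Q_e^{1/2}$ and study $D^{-1}\widehat{\mathcal C}^*Q_f\widehat{\mathcal C}D^{-1}$. The gradient direction $\bm\delta$ lies in $\ker\widehat{\mathcal C}$ because $\widehat{\mathcal C}\bm v=\bm\delta\times\bm v$, so $0$ is an eigenvalue with eigenvector $\bm\delta$. For the transverse branch we use the identity
\[
Q_e^{-1}=\frac{1}{q_xq_yq_z}Q_f,\qquad \widehat{\mathcal C}Q_e^{-1}\widehat{\mathcal C}^* = \frac{1}{q_xq_yq_z}\,\mathrm{cof}\text{-type expression}.
\]
More concretely, the trace of $Q_e^{-1}\widehat{\mathcal C}^*Q_f\widehat{\mathcal C}$ can be computed directly. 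It equals $\sum_{r}\sum_{s\neq r}|\delta_s|^2 q_t/(q_sq_t)$, where $t$ is the third index, and this simplifies to $2\sum_j|\delta_j|^2/q_j$. To show that the two nonzero eigenvalues coincide, we check that the nonzero part is a scalar multiple of a projector. Applying the operator to the vector $Q_e^{-1}\bm w$ with $\bm w\perp$ appropriate weights, equivalently using the Lagrange-type identity $\bm\delta\times(Q_f\,(\bm\delta\times\bm v))$ in the weighted setting, should give $(\sum_j|\delta_j|^2/q_j)\,Q_e\bm v$ on the $Q_e$-orthogonal complement of $\bm\delta$. Verifying this weighted vector identity is where we expect the real work. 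We would do it by brute-force componentwise expansion with the substitution $v=Q_e^{-1}u$. Dividing by $h^2$ and using $|\delta_j|^2=4\sin^2(\theta_j/2)$ gives $\lambda_h$, which is nonnegative since $q_j\ge a_\tau-b_\tau>0$. The statement about $(\widehat A_h,\widehat M_h)$ then follows by shifting by $-k_0^2$.
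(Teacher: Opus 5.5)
Your derivation of $\widehat M_h=h^3Q_e$ and $\widehat C_h=h\widehat{\mathcal C}^*Q_f\widehat{\mathcal C}$ follows the paper's tensor-product argument: the one-dimensional blended mass with symbol $q_j$, the edge-direction factor integrated exactly, and the edge-to-face incidence composed with the face mass $hQ_f$. Your $\tau=1/2$ entries $5/6$ and $1/12$ are correct. For general $\tau$, note that the local 1D mass is the convex combination of the consistent (Gauss) and lumped (Lobatto) matrices.

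The gap is in the spectral part, which is the real content of the proposition. The null vector $\bm\delta$ (for $\bm\theta\neq\bm 0$) and the trace $2\sum_j|\delta_j|^2/q_j$ determine only the \emph{sum} of the two nonzero generalized eigenvalues. That they are equal is exactly the weighted identity $\bm\delta\times\bigl(Q_f(\bm\delta\times\bm v)\bigr)=\bigl(\sum_j|\delta_j|^2/q_j\bigr)Q_e\bm v$ on the $Q_e$-orthogonal complement of $\bm\delta$. You state this identity with ``should give'' and postpone it to a brute-force expansion. The identity is true, so the plan is viable, but as written the double eigenvalue $\lambda_h$ is asserted rather than proved.

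You already wrote down the fact that closes the gap: $Q_e^{-1}=Q_f/(q_xq_yq_z)$. Use it to symmetrize instead of forming a cofactor-type expression. Put $X=Q_f^{1/2}\widehat{\mathcal C}Q_e^{-1/2}$. For $r\neq s$ with third index $t$, $(Q_e)_{ss}=q_rq_t$ and $\widehat{\mathcal C}_{rs}=\pm\delta_t$, so $X_{rs}=\pm\sqrt{q_r}\,\delta_t/\sqrt{q_rq_t}=\pm\delta_t/\sqrt{q_t}$. Thus $X$ is again a cross-product matrix, $X=[\bm b]_\times$ with $\bm b=Q_f^{-1/2}\bm\delta$. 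This is the cofactor identity you were reaching for: $D[\bm c]_\times D=\det(D)\,[D^{-1}\bm c]_\times$ for diagonal $D$.

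Hence $\widehat M_h^{-1/2}\widehat C_h\widehat M_h^{-1/2}=h^{-2}X^*X=h^{-2}\bigl(|\bm b|^2I_3-\bm b\bm b^*\bigr)$. Its eigenvalues are $0$, $h^{-2}|\bm b|^2$, $h^{-2}|\bm b|^2$, with $|\bm b|^2=\sum_j4\sin^2(\theta_j/2)/q_j$. The null vector maps back to $Q_e^{-1/2}\bm b=(q_xq_yq_z)^{-1/2}\bm\delta$, the gradient branch. Since $Q_e^{1/2}\bm b=(q_xq_yq_z)^{-1/2}Q_e\bm\delta$, undoing the similarity gives exactly your weighted identity on the $Q_e$-orthogonal complement of $\bm\delta$, and no trace computation is needed. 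This is the paper's argument, which writes $X=\ii[\bm a]_\times$ with the real vector $\bm a=-\ii\bm b$.
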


	\begin{proof}
	On an interval of length $h$, the blended local linear mass matrix is
	\[
		B_\tau=h\left\{
		\frac{1-\tau}{6}\begin{bmatrix}2&1\\1&2\end{bmatrix}
		+\frac{\tau}{2}I_2\right\}
		=h\begin{bmatrix}a_\tau/2&b_\tau/2\\b_\tau/2&a_\tau/2\end{bmatrix}.
	\]
	Assembly therefore gives the mass stencil
	$h[b_\tau/2,a_\tau,b_\tau/2]$, while the phase-centered incidence stencil is
	$[-1,1]$.  After factoring the powers of $h$, acting on a Fourier mode gives
	$a_\tau+b_\tau\cos\theta_j=q_j$ and
	$e^{\ii\theta_j/2}-e^{-\ii\theta_j/2}=\delta_j$.
	To make the tensor structure explicit, set $L_0(t)=(1-t)/2$ and
	$L_1(t)=(1+t)/2$ on $[-1,1]$.  Pulled back to the reference cube, the twelve
	scaled edge basis functions are the cyclic families
	\[
		L_\alpha(\widehat y)L_\beta(\widehat z)\bm e_x,\qquad
		L_\alpha(\widehat x)L_\beta(\widehat z)\bm e_y,\qquad
		L_\alpha(\widehat x)L_\beta(\widehat y)\bm e_z,\qquad
		\alpha,\beta\in\{0,1\}.
	\]
	For the affine cube map, $J=(h/2)I_3$.  The covariant transformation gives
	$\bm\psi_e\circ F=J^{-T}\widehat{\bm\psi}_e$, and multiplication by the physical
	edge length in \eqref{eq:edge-average-coordinates} yields
	$\bm\phi_e\circ F=2\widehat{\bm\psi}_e$.  Thus the scaled basis is dimensionless,
	its physical curl scales as $h^{-1}$, and the Jacobian contributes $h^3$ to a
	volume integral.  The mass and curl--curl blocks consequently scale as $h^3$
	and $h$, respectively.

	Thus an $r$-oriented basis function is constant in direction $r$ and linear in
	the other two.  Write $\widetilde B_\tau=B_\tau/h$ and let
	$\{r,s,t\}=\{x,y,z\}$.  Orthogonality of the coordinate vectors eliminates mass
	coupling between different edge orientations, and the four local $r$-edge
	functions have the tensor-product mass block
	\[
		M_K^{(r)}=h^3\bigl(\widetilde B_\tau^{(s)}\otimes
		\widetilde B_\tau^{(t)}\bigr).
	\]
	In each transverse direction, assembly of $\widetilde B_\tau$ has diagonal
	$a_\tau$, two neighboring entries $b_\tau/2$, and Fourier symbol $q_j$.
	Consequently,
	\[
		\widehat M_{h,rr}=h^3q_sq_t,
		\qquad
		\widehat M_{h,rs}=0\quad(r\ne s),
	\]
	which is $h^3Q_e$.

	The discrete curl maps edge coefficients to oriented face coefficients with
	incidence symbol $\widehat{\mathcal C}$.  For example,
	\[
		\nabla\times\bigl(L_\alpha(\widehat y)L_\beta(\widehat z)\bm e_x\bigr)
		=\frac{2}{h}\bigl(0,
		L_\alpha(\widehat y)L_\beta'(\widehat z),
		-L_\alpha'(\widehat y)L_\beta(\widehat z)\bigr).
	\]
	Since $2L_0'=-1$ and $2L_1'=1$, the nonzero components are the oriented
	face functions $\bm\chi_{r,\alpha}=h^{-1}L_\alpha(\widehat r)\bm e_r$.
	With $\sum_{\widehat q_j}w_j=2$, their local quadrature block is
	\begin{equation*}
	\begin{aligned}
		\langle\bm\chi_{r,\alpha},\bm\chi_{r,\beta}\rangle_{K,\tau}
		&=\frac{h^3}{8}h^{-2}(2)(2)
		\sum_{\widehat q_r}w_rL_\alpha(\widehat q_r)L_\beta(\widehat q_r)\\
		&=h(\widetilde B_\tau^{(r)})_{\alpha\beta}.
	\end{aligned}
	\end{equation*}
	The cyclic families reproduce the signs in $\widehat{\mathcal C}$.
	Assembly therefore gives the diagonal face-mass symbol
	$h\operatorname{diag}(q_x,q_y,q_z)=hQ_f$.  Composing the edge-to-face incidence,
	this block, and the adjoint incidence yields
	$h\widehat{\mathcal C}^{*}Q_f\widehat{\mathcal C}$, including the orientation
	signs fixed by \eqref{eq:lfa-curl-symbol}.  Thus tensor-product assembly gives
	\eqref{eq:lfa-maxwell-symbol}.  Since
	$q_j\geq(1+2\tau)/3>0$, the mass symbol is positive definite.  Set
	$D_e=Q_e^{1/2}$ and
	$X=Q_f^{1/2}\widehat{\mathcal C}(\bm\theta)D_e^{-1}$.  Entrywise,
	$X=\ii[\bm a]_{\times}$ is the cross-product matrix associated with
	\[
		\bm a=-\ii\left(\frac{\delta_x}{\sqrt{q_x}},
		\frac{\delta_y}{\sqrt{q_y}},
		\frac{\delta_z}{\sqrt{q_z}}\right)^T\in\mathbb R^3.
	\]
	Consequently, $X^*X=|\bm a|^2I_3-\bm a\bm a^T$ and
	\[
		\widehat M_h^{-1/2}\widehat C_h\widehat M_h^{-1/2}
		=h^{-2}X^*X.
	\]
	This matrix has eigenvalues $0$, $h^{-2}|\bm a|^2$, and
	$h^{-2}|\bm a|^2$.  If $\bm y$ is an eigenvector, then
	$\bm v=\widehat M_h^{-1/2}\bm y$ is the corresponding generalized eigenvector
	of $(\widehat C_h,\widehat M_h)$.  Its null vector is parallel to $\bm a$;
	because $Q_e^{-1/2}\bm a$ is a scalar multiple of $\bm\delta$, the generalized
	nullspace is the discrete gradient branch.  The Euclidean complement of
	$\bm a$ maps to the $\widehat M_h$-orthogonal complement of that branch and
	therefore gives the two transverse polarizations.  At $\bm\theta=\bm 0$,
	$\widehat C_h=0$ and all three
	branches coalesce at zero; \eqref{eq:lfa-maxwell-branches} then follows by
	direct evaluation (or continuity), although no polarization is distinguished at
	that single frequency.
	\end{proof}

	Equations~\eqref{eq:lfa-maxwell-symbol}--\eqref{eq:lfa-maxwell-branches}
	provide the single-level input required below.  They do not yet
	describe a coarse-grid correction, because coarsening couples several fine-grid
	frequencies to one coarse-grid mode.  We construct that coupled representation
	next.

	\subsection{Coarse-grid correction}
	\label{sec:lfa-harmonic-correction}

	To represent the coarse correction, let $\bm\theta$ satisfy
	$-\pi/2<\theta_j\leq\pi/2$, let
	$\bm\theta^{\bm\alpha}=\bm\theta+\pi\bm\alpha$ with
	$\bm\alpha\in\{0,1\}^3$, where frequencies are understood modulo $2\pi$.
	Coarsening by a factor of two identifies these eight aliases with the same
	coarse frequency $2\bm\theta$.  Including three edge orientations gives the
	$24$-dimensional invariant space in the following result.

	\begin{proposition}[Edge-harmonic closure]
	\label{prop:lfa-edge-harmonics}
	For every low frequency $\bm\theta$, the linear level operators and geometric
	transfers close on a $24$-dimensional fine-grid edge-harmonic space coupled to
	one three-component coarse mode.  The fine-grid operator on this space is
	\begin{equation}
		\widehat{\bm A}_h(\bm\theta)
		=\operatorname{diag}_{\bm\alpha\in\{0,1\}^3}
		\widehat A_h(\bm\theta^{\bm\alpha}).
		\label{eq:lfa-harmonic-maxwell-block}
	\end{equation}
	Set $c_j=\cos(\theta_j/2)$ and $s_j=\sin(\theta_j/2)$.  In the edge-centered
	Fourier basis \eqref{eq:lfa-edge-fourier-mode}, the prolongation block associated
	with alias $\bm\alpha$ is diagonal in edge orientation.  In the edge-average
	coordinates \eqref{eq:edge-average-coordinates}, its entries are
	\begin{equation}
		p_r^{\bm\alpha}(\bm\theta)
		=c_r^{1-\alpha_r}s_r^{\alpha_r}
		\prod_{j\ne r}c_j^{2(1-\alpha_j)}s_j^{2\alpha_j},
		\qquad r\in\{x,y,z\}.
		\label{eq:lfa-edge-transfer-symbol}
	\end{equation}
	Stacking the eight $3\times3$ blocks gives
	$\widehat{\bm P}_h\in\mathbb C^{24\times3}$ and, with the present Fourier
	normalization, $\widehat{\bm R}_h=8\widehat{\bm P}_h^*$.
	\end{proposition}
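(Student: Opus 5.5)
The plan is to exploit translation invariance on the infinite cubic grid together with the tensor-product structure of the edge transfers, reducing everything to one-dimensional stencil calculations.

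\emph{Step 1: the fine-grid operator.} Fix a low frequency $\bm\theta$ with $-\pi/2<\theta_j\le\pi/2$, and let $E(\bm\theta)$ be the span of the $24$ modes \eqref{eq:lfa-edge-fourier-mode} with frequencies $\bm\theta^{\bm\alpha}$ and orientations $r\in\{x,y,z\}$. Since $A_h$ is rediscretized on a uniform grid, it commutes with lattice translations. Hence it maps each Fourier mode of frequency $\bm\theta^{\bm\alpha}$ to a mode of the same frequency, through the $3\times3$ symbol of Proposition~\ref{prop:lfa-edge-symbol}. This gives the block-diagonal form \eqref{eq:lfa-harmonic-maxwell-block} directly. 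Distinctness of the eight aliases modulo $2\pi$ makes the decomposition unambiguous. The same argument applies to the coarse operators at frequency $2\bm\theta$, and to the diagonal Jacobi smoothers.

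\emph{Step 2: the prolongation.} In the coordinates \eqref{eq:edge-average-coordinates}, $P_h$ acts on an $r$-edge as a tensor product of one-dimensional maps. In direction $r$ it copies the coarse value to the two fine edges it contains. In each transverse direction it is nodal linear interpolation, with stencil $[\tfrac12,1,\tfrac12]$. Consequently $P_h$ does not mix orientations, and it suffices to expand a one-dimensional coarse mode in the two fine aliases $\theta$ and $\theta+\pi$.

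\emph{Tangential direction.} A coarse edge centred at fine position $2J+1$ carries $e^{\ii\theta(2J+1)}$. The fine edges centred at $2J+\tfrac12$ and $2J+\tfrac32$ both receive this value. Writing them as $e^{\ii\theta(m+1/2)}e^{\pm\ii\theta/2}$ and matching the alias expansion $c_0e^{\ii\theta(m+1/2)}+c_1e^{\ii(\theta+\pi)(m+1/2)}$ at even and odd $m$ gives a $2\times2$ system. Its solution is $c_0=\cos(\theta/2)$ and $c_1\propto\sin(\theta/2)$, the latter up to a unimodular factor.

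\emph{Transverse directions.} Nodes sit at integer positions. Applying the interpolation stencil to $e^{\ii 2\theta J}$ gives $(1+\cos\theta)/2=\cos^2(\theta/2)$ on the $\alpha=0$ alias and $(1-\cos\theta)/2=\sin^2(\theta/2)$ on the $\alpha=1$ alias.

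Multiplying these one-dimensional factors yields \eqref{eq:lfa-edge-transfer-symbol}. The coarse mode is thus mapped into $E(\bm\theta)$, and stacking the eight blocks gives $\widehat{\bm P}_h$.

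\emph{Step 3: closure and restriction.} Since $R_h=P_h^*$ is the Euclidean adjoint, it maps $E(\bm\theta)$ back onto the single coarse mode at $2\bm\theta$. Its symbol is the conjugate transpose of $\widehat{\bm P}_h$, scaled by the ratio of Fourier normalizations. The fine lattice has eight times as many sites per coarse site, so pairing with normalized modes gives the factor $8$, i.e.\ $\widehat{\bm R}_h=8\widehat{\bm P}_h^*$. With Steps 1--2, this shows that $A_h$, $P_h$, $R_h$ and the coarse operator all close on the $24$-dimensional space coupled to one three-component coarse mode.

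\emph{Main obstacle.} I expect the hardest part to be the phase bookkeeping in the tangential factor. Edge-centred phases, the half-integer offset in \eqref{eq:lfa-edge-fourier-mode}, and the aliasing identity $e^{\ii\pi(m+1/2)}$ together produce a factor of $\pm\ii$ multiplying $s_r$. I would handle it first by absorbing this factor into the choice of alias basis vectors, so that $c_r^{1-\alpha_r}s_r^{\alpha_r}$ appears exactly. I would then check that this rescaling is unitary, so that it commutes with the block-diagonal $\widehat{\bm A}_h$ and preserves the adjoint relation between $\widehat{\bm R}_h$ and $\widehat{\bm P}_h$.
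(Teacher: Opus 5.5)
Your overall route is the paper's: translation invariance gives the block-diagonal $\widehat{\bm A}_h$; one-dimensional tensor factors give $\widehat{\bm P}_h$ (copy along the edge, linear interpolation transversely); adjointness plus normalization gives $\widehat{\bm R}_h$. The gap is in the tangential factor. You leave $c_1$ ``up to a unimodular factor'' and predict a surviving $\pm\ii$, but none survives in the basis the statement fixes. Since $e^{\ii(\theta+\pi)(2n+\frac12)}=\ii\,e^{\ii\theta(2n+\frac12)}$ and $e^{\ii(\theta+\pi)(2n+\frac32)}=-\ii\,e^{\ii\theta(2n+\frac32)}$, matching the copied value $e^{\ii\theta(2n+1)}$ gives
\[
c_0+\ii c_1=e^{\ii\theta/2},\qquad c_0-\ii c_1=e^{-\ii\theta/2}.
\]
Hence $c_0=\cos(\theta/2)$ and $c_1=\sin(\theta/2)$ exactly, as in the paper. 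The factors $\pm\ii$ appear in the system, not in its solution, because the edge-centred alias $e^{\ii(\theta+\pi)(m+1/2)}=\ii(-1)^m e^{\ii\theta(m+1/2)}$ already absorbs them. You should simply solve the system.

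Your fallback would not have worked had the factor been real. A phase on $s_r$ arises only for orientation $r$ and only when $\alpha_r=1$. So inside one alias block your rescaling is $\operatorname{diag}(u^{\alpha_x},u^{\alpha_y},u^{\alpha_z})$, which is not a multiple of $I_3$ for mixed $\bm\alpha$ such as $(1,0,0)$. Unitarity does not imply commutation. Conjugation multiplies off-diagonal curl--curl entries of $\widehat A_h(\bm\theta^{\bm\alpha})$, such as $-h\,\overline{\delta_y}\,q_z\,\delta_x$ in position $(x,y)$, by a nontrivial phase. You would thus prove a unitarily similar pair, not the stated $\widehat{\bm A}_h$ and $\widehat{\bm P}_h$ in the basis \eqref{eq:lfa-edge-fourier-mode}.

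The only genuine phase ambiguity is a sign. Reducing $\theta_r+\pi$ modulo $2\pi$ flips the $r$-edge alias mode through the half-integer offset, and with it $p_r^{\bm\alpha}$ and $\delta_r$. This is resolved by consistency, not commutation: use the representative $\bm\theta+\pi\bm\alpha$ in both the basis and Proposition~\ref{prop:lfa-edge-symbol}.

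Step~3 also needs two justifications that the paper supplies.
\begin{itemize}
\item $R_h$ maps the harmonic space into the coarse mode at $2\bm\theta$ because it commutes with coarse-cell translations, under which all eight aliases share the character $e^{2\ii\theta_j}$.
\item The factor $8$ comes from $(G^*G)\widehat{\bm R}_h=\widehat{\bm P}_h^*(F^*F)$ with $F^*F=n_fI_{24}$ and $G^*G=n_cI_3$. This requires orthogonality of distinct aliases; the edge-centre phase $e^{\ii\pi(\beta_r-\alpha_r)/2}$ factors out of the lattice sum. The factor reflects \emph{unnormalized} modes; with normalized modes it would be $1$.
\end{itemize}
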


	\begin{proof}
	Translation invariance makes the eight aliases invariant under the fine-grid
	operator, which gives the block diagonal matrix
	\eqref{eq:lfa-harmonic-maxwell-block}.  The same argument on the coarse grid
	leaves the three-component mode at frequency $2\bm\theta$ invariant under its
	$3\times3$ symbol $\widehat A_{2h}(2\bm\theta)$.  In one dimension, a coarse mode at
	frequency $2\theta_j$ is decomposed into the fine aliases $\theta_j$ and
	$\theta_j+\pi$.  Along an edge direction, the coarse edge centered at
	$2n+1$ in fine-grid units is copied to the child edges centered at
	$2n+1/2$ and $2n+3/2$.  In the edge-centered basis
	\eqref{eq:lfa-edge-fourier-mode}, the two alias coefficients $a$ and $b$
	therefore satisfy
	\[
		a+\ii b=e^{\ii\theta_j/2},
		\qquad
		a-\ii b=e^{-\ii\theta_j/2},
	\]
	and hence $a=c_j$ and $b=s_j$.  For linear nodal interpolation, the value at
	a fine even point $2n$ is $e^{2\ii n\theta_j}$, whereas the value at the odd
	point $2n+1$ is
	\[
		\tfrac12\bigl(e^{2\ii n\theta_j}+e^{2\ii(n+1)\theta_j}\bigr)
		=\cos\theta_j\,e^{\ii(2n+1)\theta_j}.
	\]
	The two aliases agree at even points and have opposite signs at odd points.
	Thus $a+b=1$ and $a-b=\cos\theta_j$, so $a=c_j^2$ and $b=s_j^2$.  For an
	$r$-oriented edge, interpolation copies in direction
	$r$ and is linear in the two transverse directions.  Multiplying the three
	one-dimensional factors therefore gives \eqref{eq:lfa-edge-transfer-symbol},
	and stacking the eight aliases and three orientations gives
	$\widehat{\bm P}_h$ in the same edge-centered basis as the operator block.

	Restriction commutes with translations by one coarse cell.  Under such a
	translation every fine alias acquires the same phase
	$e^{2\ii\theta_j}$, because
	$e^{2\ii(\theta_j+\pi\alpha_j)}=e^{2\ii\theta_j}$.  On the coarse grid, the
	joint eigenspace of the three unit-cell translations with characters
	$(e^{2\ii\theta_x},e^{2\ii\theta_y},e^{2\ii\theta_z})$ is precisely the
	three-orientation Fourier mode at $2\bm\theta$.  Since restriction commutes with
	each translation, it preserves these three characters and therefore maps the
	whole fine-harmonic space into that coarse eigenspace.  Let $F$ and $G$ collect
	the unnormalized fine-harmonic and coarse
	Fourier basis vectors.  The coefficient symbols are defined by
	$P_hG=F\widehat{\bm P}_h$ and $R_hF=G\widehat{\bm R}_h$.  Therefore
	$R_h=P_h^*$ implies
	\[
		(G^*G)\widehat{\bm R}_h
		=\widehat{\bm P}_h^*(F^*F).
	\]
	To evaluate the Gram matrices, take
	$\Lambda_f=\prod_{j=1}^3\{0,\ldots,N_j-1\}$ with every $N_j$ even and set
	$n_f=N_1N_2N_3$.  The corresponding coarse grid has $N_j/2$ cells in direction
	$j$, and therefore $n_c=n_f/8$.  Different edge orientations have disjoint
	coefficient supports, while for a fixed orientation
	\[
		\langle F_{r,\bm\alpha},F_{r,\bm\beta}\rangle
		=e^{\ii\pi(\beta_r-\alpha_r)/2}
		\prod_{j=1}^3\sum_{m=0}^{N_j-1}
		e^{\ii\pi(\beta_j-\alpha_j)m}
		=n_f\delta_{\bm\alpha\bm\beta}.
	\]
	Indeed, each one-dimensional sum is $N_j$ when $\alpha_j=\beta_j$ and is
	$\sum_{m=0}^{N_j-1}(-1)^m=0$ otherwise; the edge-center phase therefore does
	not affect orthogonality.  The same calculation on the coarse grid gives
	$\langle G_r,G_s\rangle=n_c\delta_{rs}$.  Hence
	$F^*F=n_f I_{24}$ and $G^*G=n_c I_3$.  Since $n_f/n_c=8$, substitution
	in the preceding identity yields
	$\widehat{\bm R}_h=8\widehat{\bm P}_h^*$.
	\end{proof}

	The factor eight reflects only the relative normalization of the fine and coarse
	Fourier bases; globally, $R_h=P_h^*$.  For standard Gauss quadrature this
	normalization gives the exact nested identity
	\begin{equation}
		\widehat{\bm R}_h(\bm\theta)
		\widehat{\bm A}_h(\bm\theta)
		\widehat{\bm P}_h(\bm\theta)
		=\widehat A_{2h}(2\bm\theta).
		\label{eq:lfa-gauss-galerkin-identity}
	\end{equation}
	The blended coarse operator is instead rediscretized and need not equal this
	Galerkin product.

	For the tensor blended rule, the isotropic diagonal of $A_h^\sigma$ is
	\begin{equation}
		d_h^\sigma=4a_\tau h-(1+\ii\sigma)k_0^2h^3a_\tau^2.
		\label{eq:lfa-jacobi-diagonal}
	\end{equation}
	At a frequency $\bm\theta$, the symbol of one damped Jacobi sweep is
	\[
		\widehat J_h^\sigma(\bm\theta;\omega_h^{\mathrm{J}})
		=I_3-\omega_h^{\mathrm{J}}(d_h^\sigma)^{-1}
		\widehat A_h^\sigma(\bm\theta).
	\]
	Consequently, the prescribed $\nu_h$ sweeps define the linear approximate inverse
	\begin{equation}
		\widehat{\mathcal J}_{h,\nu_h}^{-1}
		(\bm\theta;\omega_h^{\mathrm{J}},\sigma)
		=\sum_{r=0}^{\nu_h-1}
		\widehat J_h^\sigma(\bm\theta;\omega_h^{\mathrm{J}})^r
		\omega_h^{\mathrm{J}}(d_h^\sigma)^{-1}I_3.
		\label{eq:lfa-jacobi-inverse}
	\end{equation}
	Suppressing the common base frequency and using bold symbols for the
	eight-alias blocks, set
	$\widehat{\bm J}_h^{\rm pre}=\widehat{\bm J}_h^{\rm post}
	=\operatorname{diag}_{\bm\alpha}
	\widehat J_h^0(\bm\theta^{\bm\alpha};\omega_h^{\mathrm{J}})^{\nu_h}$.
	With $\sigma=0$, stationary Jacobi in place of GMRES--Jacobi, and an exact
	rediscretized $2h$ correction, the idealized error symbol at frequencies where
	$\widehat A_{2h}(2\bm\theta)$ is nonsingular is
	\begin{equation}
		\widehat{\mathcal E}_h^{\rm TG}
		=\widehat{\bm J}_h^{\rm post}
		\left[I_{24}-\widehat{\bm P}_h\widehat A_{2h}(2\bm\theta)^{-1}
		\widehat{\bm R}_h\widehat{\bm A}_h\right]
		\widehat{\bm J}_h^{\rm pre}.
		\label{eq:lfa-fine-two-grid}
	\end{equation}
	If $\widehat{\mathcal E}_h^{\rm TG}
	=I_{24}-\widehat\Phi_h^{-1}\widehat{\bm A}_h$, then
	$\widehat\Phi_h^{-1}\widehat{\bm A}_h$ and the right-preconditioned symbol
	$\widehat{\bm A}_h\widehat\Phi_h^{-1}$ have the same spectrum.  The implemented
	method replaces the exact $2h$ inverse in \eqref{eq:lfa-fine-two-grid} by the
	fixed-work shifted $2h$--$4h$ process analyzed next.

	\subsection{Shifted auxiliary hierarchy}
	\label{sec:lfa-shifted-hierarchy}

	The three-grid cycle approximates the physical $2h$ inverse by fixed-work FGMRES
	preconditioned through $B_{2h}$.  The following result isolates the exact
	branchwise action of the shifted map
	$\widehat A_\ell\widehat B_\ell^{-1}$.

	\begin{proposition}[Exact shifted Maxwell map]
	\label{prop:lfa-shift-circle}
	Fix $\ell\in\{2h,4h\}$ and suppress a fixed frequency $\bm\theta$ in the symbols.  Let
	$\widehat C_\ell\bm v=\lambda\widehat M_\ell\bm v$ with
	$\lambda\geq0$, and set $\bm w=\widehat M_\ell\bm v$.  For
	$\widehat A_\ell=\widehat C_\ell-k_0^2\widehat M_\ell$ and
	$\widehat B_\ell=\widehat C_\ell-
	(1+\ii\sigma)k_0^2\widehat M_\ell$, $\sigma>0$, the right-preconditioned
	symbol $\widehat A_\ell\widehat B_\ell^{-1}$ has eigenvector $\bm w$ and
	eigenvalue
	\begin{equation}
		\mathfrak{z}_\sigma(\lambda)=
		\frac{\lambda-k_0^2}{\lambda-(1+\ii\sigma)k_0^2},
		\qquad
		\left(\operatorname{Re}\mathfrak{z}_\sigma(\lambda)-\frac12\right)^2
		+\left(\operatorname{Im}\mathfrak{z}_\sigma(\lambda)\right)^2=\frac14.
		\label{eq:lfa-shift-circle}
	\end{equation}
	Moreover,
	\begin{equation}
		\left|\lambda-(1+\ii\sigma)k_0^2\right|
		\geq \sigma k_0^2,
		\label{eq:lfa-shift-denominator-bound}
	\end{equation}
	so the shifted auxiliary symbol has no real-frequency singularity.  The image
	of the right-preconditioned physical symbol is bounded and lies in the closed
	right half-plane.  The
	discrete gradient branch is mapped to $(1+\ii\sigma)^{-1}$, while both
	transverse branches lie on the same circle.
	\end{proposition}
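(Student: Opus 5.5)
The plan is to diagonalize both symbols simultaneously through the generalized eigenproblem of Proposition~\ref{prop:lfa-edge-symbol} and then check the circle identity by direct algebra.

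\textbf{Eigenpair.} Since $\widehat C_\ell\bm v=\lambda\widehat M_\ell\bm v$, we have
\[
\widehat A_\ell\bm v=(\lambda-k_0^2)\widehat M_\ell\bm v=(\lambda-k_0^2)\bm w,
\qquad
\widehat B_\ell\bm v=\bigl(\lambda-(1+\ii\sigma)k_0^2\bigr)\bm w .
\]
Provided the factor $\lambda-(1+\ii\sigma)k_0^2$ is nonzero, $\widehat B_\ell^{-1}\bm w=\bm v/(\lambda-(1+\ii\sigma)k_0^2)$. Applying $\widehat A_\ell$ then gives $\widehat A_\ell\widehat B_\ell^{-1}\bm w=\mathfrak z_\sigma(\lambda)\bm w$, with $\bm w\neq 0$ because $\widehat M_\ell$ is positive definite.

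\textbf{Invertibility.} The nonvanishing of that factor is the bound \eqref{eq:lfa-shift-denominator-bound}. Since $\lambda$ is real,
\[
\left|\lambda-(1+\ii\sigma)k_0^2\right|\geq\left|\operatorname{Im}\bigl(\lambda-(1+\ii\sigma)k_0^2\bigr)\right|=\sigma k_0^2>0 .
\]
The eigenvectors of the Hermitian-definite pencil $(\widehat C_\ell,\widehat M_\ell)$ form a basis, so $\widehat B_\ell$ is nonsingular at every real frequency. I would order the argument this way in the write-up, proving invertibility before writing $\widehat B_\ell^{-1}$.

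\textbf{Circle identity.} Write $a=\lambda-k_0^2\in\mathbb R$ and $b=\sigma k_0^2>0$, so that $\mathfrak z=a/(a-\ii b)=a(a+\ii b)/(a^2+b^2)$. Then
\[
\operatorname{Re}\mathfrak z=\frac{a^2}{a^2+b^2},\qquad \operatorname{Im}\mathfrak z=\frac{ab}{a^2+b^2}.
\]
Expanding the left side of \eqref{eq:lfa-shift-circle} gives
\[
\left(\operatorname{Re}\mathfrak z-\tfrac12\right)^2+\left(\operatorname{Im}\mathfrak z\right)^2
=\frac{(a^2-b^2)^2+4a^2b^2}{4(a^2+b^2)^2}=\frac14 .
\]
Hence every eigenvalue lies on the circle of radius $1/2$ centered at $1/2$. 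That circle is bounded by $|\mathfrak z|\le 1$ and lies in the closed right half-plane, since $\operatorname{Re}\mathfrak z\ge0$.

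\textbf{Branches.} For the gradient branch, $\lambda=0$ by \eqref{eq:lfa-maxwell-branches}, which gives $\mathfrak z=-k_0^2/(-(1+\ii\sigma)k_0^2)=(1+\ii\sigma)^{-1}$. That value is itself on the circle. Both transverse branches share the value $\lambda_\ell(\bm\theta)$ and so land on the same circle.

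\textbf{Main obstacle.} The one subtlety is that ``the image of the symbol is bounded'' must be read as a statement about the spectrum, since $\widehat A_\ell\widehat B_\ell^{-1}$ need not be normal in the Euclidean sense. I would handle this by noting that in the $\widehat M_\ell^{-1}$-weighted inner product it is diagonalized by the $\bm w$'s. At $\bm\theta=0$ all branches have $\lambda=0$ and the map is simply the scalar $(1+\ii\sigma)^{-1}$, so the degenerate case causes no difficulty.
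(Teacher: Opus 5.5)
Your proof is correct and follows essentially the same approach as the paper's: invertibility of $\widehat B_\ell$ from the denominator bound together with the generalized eigenbasis of the Hermitian-definite pencil, then the eigenpair $(\bm w,\mathfrak z_\sigma(\lambda))$ via $\widehat B_\ell^{-1}\bm w=\bm v/(\lambda-(1+\ii\sigma)k_0^2)$, the same real/imaginary-part computation for the circle, and $\lambda=0$ for the gradient branch. Your remark that the boundedness and half-plane claims concern the spectrum is a correct clarification the paper leaves implicit: $\widehat A_\ell\widehat B_\ell^{-1}$ is normal only in the $\widehat M_\ell^{-1}$-weighted inner product, where the $\bm w$'s can be taken orthonormal.
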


	\begin{proof}
	By Proposition~\ref{prop:lfa-edge-symbol}, $\widehat M_\ell$ is positive
	definite and $\widehat C_\ell$ is Hermitian positive semidefinite.  Their
	generalized eigenvectors therefore form a basis, and
	$\bm w=\widehat M_\ell\bm v\ne\bm0$.  For every generalized eigenvalue
	$\lambda\geq0$, the standing assumption $\omega>0$ in
	Section~\ref{sec:maxwell-discretization} gives $k_0>0$, and
	\[
		|\lambda-(1+\ii\sigma)k_0^2|^2
		=(\lambda-k_0^2)^2+\sigma^2k_0^4
		\geq\sigma^2k_0^4>0.
	\]
	Thus $|\lambda-(1+\ii\sigma)k_0^2|\geq\sigma k_0^2$.  If
	$\{\bm v_j\}$ is a generalized eigenbasis, then
	$\{\widehat M_\ell\bm v_j\}$ is also a basis and
	\[
		\widehat B_\ell\bm v_j
		=\bigl(\lambda_j-(1+\ii\sigma)k_0^2\bigr)
		\widehat M_\ell\bm v_j.
	\]
	Every scalar factor is nonzero, so $\widehat B_\ell$ maps a basis to a basis and
	is invertible.
	The generalized eigenrelation implies
	\[
		\widehat A_\ell\bm v=(\lambda-k_0^2)\widehat M_\ell\bm v,
		\qquad
		\widehat B_\ell\bm v=
		\bigl(\lambda-(1+\ii\sigma)k_0^2\bigr)
		\widehat M_\ell\bm v.
	\]
	The second relation gives
	$\widehat B_\ell^{-1}\bm w=
	\bm v/[\lambda-(1+\ii\sigma)k_0^2]$.  Applying $\widehat A_\ell$ then shows
	that $\bm w$ is an eigenvector of
	$\widehat A_\ell\widehat B_\ell^{-1}$ with eigenvalue
	$\mathfrak{z}_\sigma(\lambda)$.  Set
	$t=\lambda-k_0^2$ and $\beta=\sigma k_0^2$.  Then
	\[
		\mathfrak{z}_\sigma(\lambda)=\frac{t}{t-\ii\beta}
		=\frac{t^2+\ii\beta t}{t^2+\beta^2},
	\]
	so $\operatorname{Re}\mathfrak{z}_\sigma(\lambda)=t^2/(t^2+\beta^2)\geq0$
	and $\operatorname{Im}\mathfrak{z}_\sigma(\lambda)=\beta t/(t^2+\beta^2)$.
	Consequently,
	\[
		\left(\operatorname{Re}\mathfrak{z}_\sigma-\tfrac12\right)^2
		+\left(\operatorname{Im}\mathfrak{z}_\sigma\right)^2
		=\frac{(t^2-\beta^2)^2+4\beta^2t^2}
		{4(t^2+\beta^2)^2}=\frac14.
	\]
	This identity also proves boundedness and the closed-right-half-plane claim.
	Finally, Proposition~\ref{prop:lfa-edge-symbol} identifies $\lambda=0$ as the
	gradient branch, which is mapped to $(1+\ii\sigma)^{-1}$; for nonzero frequency,
	the repeated nonzero eigenvalue gives the two transverse branches.
	\end{proof}

	The denominator bound removes real-frequency singularities from the auxiliary
	inverse, while the circle identity confines the exactly preconditioned spectrum.
	Modes near the physical dispersion surface remain near the origin because the
	numerator is unshifted.  The practical $2h$--$4h$ cycle approximates
	$B_{2h}^{-1}$; at the bottom level its GMRES operator is
	$\widehat B_{4h}(\bm\theta)
	\widehat{\mathcal J}_{4h,\nu_{4h}}^{-1}
	(\bm\theta;\omega_{4h}^{\mathrm{J}},\sigma)$, with
	$\widehat{\mathcal J}_{4h,\nu_{4h}}^{-1}$ defined by
	\eqref{eq:lfa-jacobi-inverse}.  We now use these symbols to select the shift and
	damping parameters of the finite-work cycle.

	\subsection{Parameter selection}
	\label{sec:lfa-parameter-guidance}

	Only $\sigma$ and the Jacobi damping factors
	$\omega_\ell^{\mathrm{J}}$ are examined here; the Krylov and sweep counts remain
	the prescribed budgets in Section~\ref{sec:experimental-protocol}.  Because a
	fixed-step Krylov polynomial
	depends on its incoming residual, the complete cycle has no single
	residual-independent error symbol.  We therefore combine the exact identities
	above with a sampled finite-work diagnostic
	\cite{WienandsOosterleeWashio2000,WienandsOosterlee2001}, evaluated at the
	minimum experimental resolution $\mathrm{ppw}=2\pi/(k_0h)=8$.  The scan
	uses no iteration counts, timings, or heterogeneous data from
	Sections~\ref{sec:numerical-validation}--\ref{sec:performance}.

	\paragraph{Stationary level maps}
	For the GMRES--Jacobi components, set
	$\sigma_h=0$ and $\sigma_{2h}=\sigma_{4h}=\sigma$.  Let the level spacings be
	$h_h=h$, $h_{2h}=2h$, and $h_{4h}=4h$, and set
	\[
		d_\ell^{\sigma_\ell}
		=4a_\tau h_\ell
		-(1+\ii\sigma_\ell)k_0^2h_\ell^3a_\tau^2.
	\]
	Let $\widehat{\mathcal J}_{\ell,\nu_\ell}^{-1}
	(\bm\theta;\omega_\ell^{\mathrm{J}},\sigma_\ell)$ denote the levelwise analogue
	of \eqref{eq:lfa-jacobi-inverse}.  If $\lambda_J$ is an eigenvalue of
	$(d_\ell^{\sigma_\ell})^{-1}
	\widehat A_\ell^{\sigma_\ell}(\bm\theta)$, the corresponding
	eigenvalue of
	$\widehat A_\ell^{\sigma_\ell}(\bm\theta)
	\widehat{\mathcal J}_{\ell,\nu_\ell}^{-1}
	(\bm\theta;\omega_\ell^{\mathrm{J}},\sigma_\ell)$ is
	\begin{equation}
		g_{\omega_\ell^{\mathrm{J}},\nu_\ell}(\lambda_J)
		=1-(1-\omega_\ell^{\mathrm{J}}\lambda_J)^{\nu_\ell}.
		\label{eq:lfa-jacobi-map}
	\end{equation}
	Since $d_\ell^{\sigma_\ell}$ is scalar on the homogeneous cubic grid,
	\eqref{eq:lfa-jacobi-map} follows by summing the stationary polynomial.  It
	describes the spectrum presented to GMRES, not its residual-dependent polynomial.

	\paragraph{Complete fixed-work diagnostic}
	Applying Proposition~\ref{prop:lfa-edge-harmonics} recursively couples $4^3$
	fine-grid aliases for each base frequency in $(-\pi/4,\pi/4]^3$, giving blocks of
	dimensions $192$, $24$, and $3$ on the three levels.  Let
	$\bm\vartheta=(\sigma,\omega_h^{\mathrm{J}},\omega_{2h}^{\mathrm{J}},
	\omega_{4h}^{\mathrm{J}})$.  We write
	$\widehat{\mathcal M}_h^{-1}(\bm\theta;\bm\vartheta)[\bm r]$ for the output of
	one application of the prescribed three-grid process to a harmonic residual
	$\bm r$.  On the tensor-product midpoint grid
	\[
		\Theta_{n_\theta}
		=
		\left\{-\frac{\pi}{4}
		+\frac{(s+1/2)\pi}{2n_\theta}:
		s=0,\ldots,n_\theta-1\right\}^{3}
	\]
	and a fixed set $\mathcal R_{n_r}\subset\mathbb C^{192}$ of normalized complex
	residual probes, define
	\begin{equation}
		\widetilde\Gamma_{\rm 3G}^{n_\theta,n_r}(\bm\vartheta)=
		\max_{\bm\theta\in\Theta_{n_\theta}}
		\max_{\bm r\in\mathcal R_{n_r}}
		\frac{\left\|\bm r-
		\widehat{\mathbb A}_h(\bm\theta)
		\widehat{\mathcal M}_h^{-1}(\bm\theta;\bm\vartheta)[\bm r]
		\right\|_2}{\|\bm r\|_2},
		\label{eq:lfa-complete-cycle-factor}
	\end{equation}
	where $\widehat{\mathbb A}_h$ contains all $4^3$ fine aliases.  This notation
	distinguishes the $192\times192$ two-coarsening closure from
	$\widehat{\bm A}_h$ in \eqref{eq:lfa-harmonic-maxwell-block}.  We compare the
	maximum, $95$th percentile, and mean sampled reductions, refining to $8^3$
	frequencies and twelve probes per frequency.  The quantity in
	\eqref{eq:lfa-complete-cycle-factor} is a finite diagnostic, not a uniform bound.

	\paragraph{Damping selection}
	We scan one damping factor at a time in
	\eqref{eq:lfa-complete-cycle-factor}, holding the remaining parameters fixed.
	On the finest sample, the worst-case fine-grid indicator is minimized at
	$\omega_h^{\mathrm{J}}=0.55$; the upper-tail and mean minima occur at $0.575$.
	We select $0.55$.  Over $0.4\leq\omega_{2h}^{\mathrm{J}}\leq0.8$, the maximum
	changes by less than $0.7\%$ and the other indicators by less than $0.3\%$;
	$\omega_{2h}^{\mathrm{J}}=0.55$ lies in this insensitive range.  For
	$m_{4h}\geq3$, nonsingular three-dimensional bottom blocks are solved exactly by
	GMRES in exact arithmetic, making the local prediction independent of
	$\omega_{4h}^{\mathrm{J}}$; we fix it at $0.4$.

	\paragraph{Shift selection}
	The shift balances two competing effects.  Proposition~\ref{prop:lfa-shift-circle}
	shows that increasing $\sigma$ improves the lower bound for the auxiliary
	denominator.  On the other hand,
	\begin{equation}
		|1-\mathfrak{z}_\sigma(\lambda)|
		=
		\frac{\sigma k_0^2}
		{\sqrt{(\lambda-k_0^2)^2+\sigma^2k_0^4}},
		\label{eq:lfa-shift-perturbation}
	\end{equation}
	which increases with $\sigma$ for every fixed off-resonant branch value.
	Thus increasing $\sigma$ regularizes the auxiliary inverse but moves it farther
	from the physical inverse.  On the same frequency grid, we separately measure
	the residual factors of one shifted $B_{2h}$ V-cycle and of the prescribed
	$A_{2h}$ FGMRES process preconditioned by that cycle.  The former decreases with
	$\sigma$, whereas the latter eventually increases; their sampled worst cases
	balance near $\sigma=0.3$.  The complete-cycle response is flatter:
	under refinement, its shallow minima remain in $0.2\leq\sigma\leq0.45$, within
	which every reported indicator varies by less than $0.7\%$.  We choose
	$\sigma=0.4$, obtaining stronger denominator regularization while remaining on
	this plateau.  The selected fine damping is therefore supported by a stable
	sampled minimum, while the shift and remaining damping factors lie in ranges
	identified as insensitive.  These values are fixed for all experiments.

	\subsection{Three-grid spectral roles}
	\label{sec:lfa-spectral-roles}

	With these parameter roles established,
	Figure~\ref{fig:maxwell-lfa-spectra} shows how the spectral tasks
	are divided among the three levels.  It evaluates the stationary component maps with
	the Jacobi sweep counts reported in Section~\ref{sec:experimental-protocol} at
	fine-grid $\mathrm{ppw}=8$.  These are component, not complete-cycle, spectra.  In the left
	panel, an exact rediscretized $2h$
	correction with the prescribed pre- and post-Jacobi maps sends many eigenvalues
	close to one while leaving indefinite outliers, for which GMRES supplies a
	residual-adapted polynomial.  In the center, the
	unshifted intermediate-grid Jacobi map produces a broad real spectrum crossing the origin,
	whereas exact shifted preconditioning confines the eigenvalues to the circle in
	Proposition~\ref{prop:lfa-shift-circle}; the shifted V-cycle approximates this
	action inside the unshifted $2h$ FGMRES solve.  The bottom map remains separated
	from the origin, leaving a small fixed GMRES process to treat its spread.
	\begin{figure}[H]
		\centering
		\includegraphics[width=0.94\textwidth]{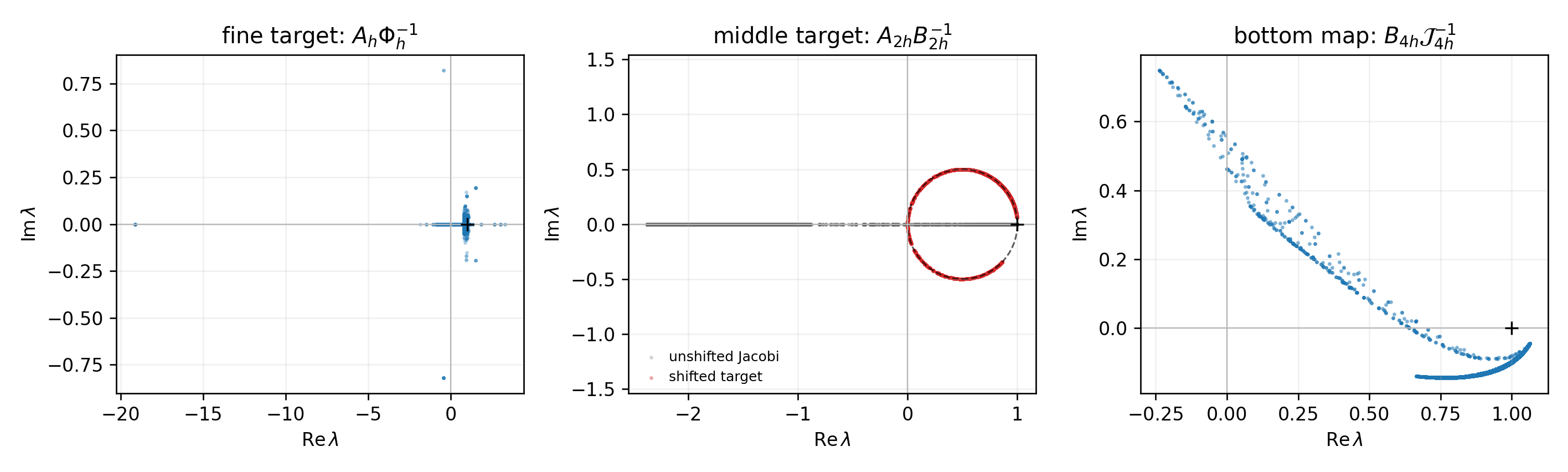}
		\caption{Component spectra at fine-grid $\mathrm{ppw}=8$ for
		$\tau=1/2$, $\sigma=0.4$, and the Jacobi sweep counts in
		Section~\ref{sec:experimental-protocol}.
		Left: the idealized unshifted fine-grid two-grid operator with an exact
		rediscretized coarse inverse, the prescribed stationary smoother, and
		$\omega_h^{\mathrm{J}}=0.55$.  Center: the unshifted intermediate-grid Jacobi map with
		$\omega_{2h}^{\mathrm{J}}=0.55$ (gray) and the exact shifted target (red),
		together with the circle in \eqref{eq:lfa-shift-circle}.  Right: the shifted
		$4h$ operator with its prescribed Jacobi preconditioner and
		$\omega_{4h}^{\mathrm{J}}=0.4$.}
		\label{fig:maxwell-lfa-spectra}
	\end{figure}

	An independent periodic assembly, formed by direct quadrature of the twelve
	local N{\'e}delec fields and curls rather than by the tensor symbol formulas,
	reproduced the local blocks, edge symbol, Maxwell branches, transfer identity,
	and Gauss Galerkin identity to relative accuracy $10^{-13}$ or better.  As
	expected, blended rediscretization did not satisfy the Gauss Galerkin identity.
	Variable coefficients and PML profiles lie outside this translation-invariant
	analysis and are examined numerically in
	Sections~\ref{sec:numerical-validation}--\ref{sec:performance}.
	
	\section{Numerical Validation}
	\label{sec:numerical-validation}
	
	Before considering heterogeneous media and large-scale calculations, we use a
	homogeneous point-source problem to establish the numerical and computational
	choices used in the remainder of the paper.  The outgoing Maxwell Green tensor
	provides an analytical reference against which the discrete point source, the
	blended lowest-order N{\'e}delec operator, and the PML truncation can be assessed
	together.  The same controlled problem is then used to identify a sufficient
	outer stopping tolerance, to verify the use of complex single precision, and to
	compare the CPU and GPU realizations of the three-grid algorithm.  Consequently,
	the parameters used in the large-scale experiments are selected from their effect
	on the computed field, rather than from timing alone.
	
	\subsection{Green-tensor validation}
	
	The physical domain is
	\[
	\Omega_{\rm ph}=[-1,1]^3.
	\]
	The medium is homogeneous, and the validation uses a $768^3$ total mesh,
	including the PML cells, with
	\[
	{\rm ppw}=8,\qquad n_{\rm pml}=12,
	\]
	where ${\rm ppw}$ is the number of fine-grid points per physical wavelength
	in the homogeneous domain and $n_{\rm pml}$ is the number of fine-grid cells in
	each PML layer.
	The physical domain therefore spans approximately 93 wavelengths.  A
	$z$-oriented point current is prescribed near $(0,0.5,0)$.  Since the discrete
	unknowns are edge moments, the source is not represented by assigning a value to
	a node or by treating it as a cellwise constant load.  Instead, the discrete load
	on $V_h$ is defined by evaluating
	$\bm v\mapsto\bm e_z\cdot\bm v(\bm x_s)$, where $\bm e_z$ is the unit vector
	in the $z$ direction, on the local N{\'e}delec basis functions
	of the source cell.  Writing $\bm x_s=(x_s,y_s,z_s)$, the corresponding
	discrete source location is
	\[
	\bm{x}_s=(0.00134333,\;0.50134295,\;0.00134333).
	\]
	At the cell center, the functional acts on the four $z$-oriented edge basis
	functions, up to the common source normalization introduced below.

	The reference, tolerance, and precision tests use the GPU matrix-free solver.
	In the CPU--GPU comparison, only the realization of the level operators is
	changed; the discrete system and preconditioner remain fixed.  In all cases,
	the outer equation is unshifted, and the shift is confined to the auxiliary
	hierarchy.
	
		For a homogeneous medium, let
	$k=k_0\sqrt{\epsilon_r\mu_r}$ denote the physical wavenumber.  The outgoing
	scalar Green function is
	\[
	G_k(\bm{x},\bm{x}_s)
	=
	\frac{\exp(\ii k|\bm{x}-\bm{x}_s|)}
	{4\pi |\bm{x}-\bm{x}_s|}.
	\]
		For a $z$-directed electric source, the reference $E_z$ component is computed
		from the Maxwell Green tensor \cite{Monk2003} as
	\[
	E_z^{\rm ref}
	=
	c_{\rm src}\left(1+\frac{1}{k^2}\partial_{zz}\right)
	G_k(\bm{x},\bm{x}_s),
	\]
	where the complex scalar $c_{\rm src}$ is determined by least squares over the
	retained physical-domain sampling points described below.  It accounts
	for the amplitude normalization introduced by applying a continuous point
	current through discrete edge moments; the phase and spatial variation of the
	reference field are not fitted.
	
	The continuous reference is singular at the source, where a direct comparison
	with any finite-dimensional load representation would be dominated by the
	source regularization.  Following the assessment procedure used for
	geophysical wave solvers \cite{Tournier2022}, we therefore exclude a
	source-centered ball of radius one wavelength,
	\[
	\mathcal B_{\lambda_{\rm w}}(\bm{x}_s)=
	\{\bm{x}:|\bm{x}-\bm{x}_s|<\lambda_{\rm w}\},\qquad
		\lambda_{\rm w}=\frac{2\pi}{k},
	\]
	and compensate for the geometric amplitude decay by a distance gain.  The
	reported error is
	\[
	\begin{aligned}
		{\rm Err}
		&=
		\frac{%
			\sum_{\bm{x}^{(j)}\in \Omega_{\rm ph}\setminus \mathcal B_{\lambda_{\rm w}}}
			|\bm{x}^{(j)}-\bm{x}_s|
			\left|\operatorname{Re}(E_{z,h}(\bm{x}^{(j)})-E_z^{\rm ref}(\bm{x}^{(j)}))\right|
		}{%
			\sum_{\bm{x}^{(j)}\in \Omega_{\rm ph}\setminus \mathcal B_{\lambda_{\rm w}}}
			|\bm{x}^{(j)}-\bm{x}_s|
			\left|\operatorname{Re}(E_z^{\rm ref}(\bm{x}^{(j)}))\right|
		}  \\
		&\quad+
		\frac{%
			\sum_{\bm{x}^{(j)}\in \Omega_{\rm ph}\setminus \mathcal B_{\lambda_{\rm w}}}
			|\bm{x}^{(j)}-\bm{x}_s|
			\left|\operatorname{Im}(E_{z,h}(\bm{x}^{(j)})-E_z^{\rm ref}(\bm{x}^{(j)}))\right|
		}{%
			\sum_{\bm{x}^{(j)}\in \Omega_{\rm ph}\setminus \mathcal B_{\lambda_{\rm w}}}
			|\bm{x}^{(j)}-\bm{x}_s|
			\left|\operatorname{Im}(E_z^{\rm ref}(\bm{x}^{(j)}))\right|
		},
	\end{aligned}
	\]
	where $E_{z,h}=\bm e_z\cdot\bm E_h$ and $\bm x^{(j)}$ are the sampling points in
	the physical domain; PML cells are not included in the sums.

	Table~\ref{tab:pointsource-validation} summarizes the solver statistics and
	the distance-gained error for this validation.  The numerical and analytical wavefronts in
	Figure~\ref{fig:pointsource-slices} remain aligned on all three source-crossing
	planes, while the visible discrepancy is concentrated near the discrete source.
	Figure~\ref{fig:pointsource-1d} makes the phase comparison more explicit on two
	receiver lines in the plane $z=z_s$, neither of which intersects the singularity.
	Multiplication by the source distance removes the dominant $1/r$ decay, and the
	numerical profiles closely follow the analytical oscillations across the physical
	domain.  Together, the comparisons check the source, discretization, and PML
		treatment under long-range propagation without attributing the remaining error to
		any one component.

	\begin{table}[H]
		\centering
		\caption{Homogeneous point-source validation on eight A100-40G GPUs.
			The true residual refers to the original unshifted system, and PC calls
			counts preconditioner applications.  Performance entries match the
			$768^3$ homogeneous run in Table~\ref{tab:a100-mesh-weak-scaling}.}
		\resizebox{\textwidth}{!}{%
		\begin{tabular}{cccccccccc}
			\toprule
			mesh & GPUs & ppw & $n_{\rm pml}$ & PC calls
			& $\|A_h\bm U_h-\bm F_h\|_2/\|\bm F_h\|_2$ & setup & solve & peak memory & Err \\
			\midrule
			$768^3$ & 8 & 8 & 12 & 30 & $8.2410\times 10^{-4}$
			& 0.221 s & 17.668 s & 296.8 GiB & $1.5786\times 10^{-1}$ \\
			\bottomrule
		\end{tabular}%
		}
		\label{tab:pointsource-validation}
	\end{table}

	\begin{figure}[!t]
		\centering
		\includegraphics[width=0.775\textwidth,trim=0 14bp 0 18bp,clip]{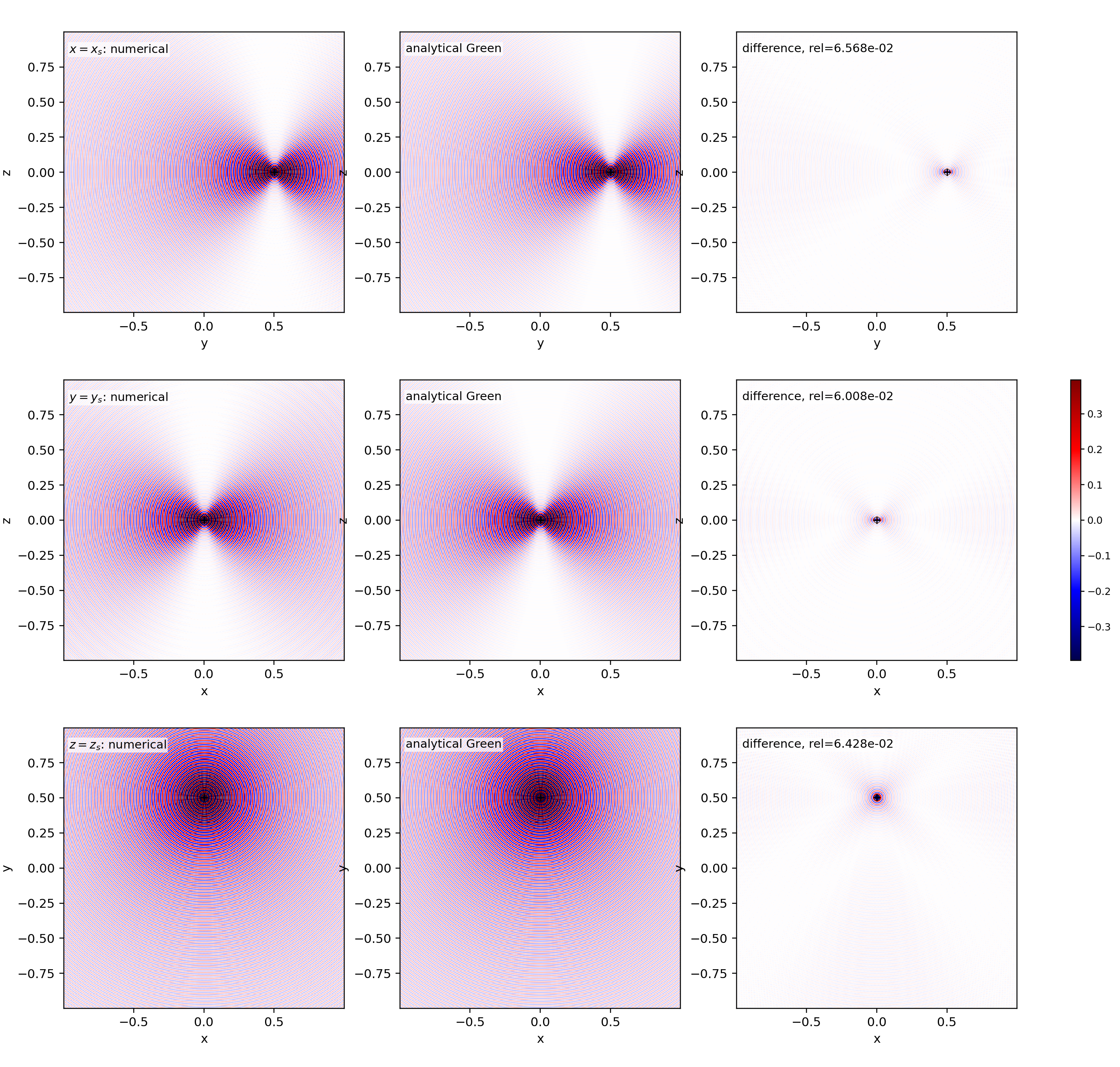}
		\caption{Homogeneous point-source validation on a $768^3$ total mesh.
			Rows show the source-crossing planes $x=x_s$, $y=y_s$, and $z=z_s$;
			columns show numerical $\operatorname{Re}(E_z)$, the fitted Maxwell Green
			reference, and their difference.  The cross and circle mark the source and
			the one-wavelength exclusion radius, respectively.}
		\label{fig:pointsource-slices}
	\end{figure}
	
	\begin{figure}[!t]
		\centering
		\includegraphics[width=0.835\textwidth,trim=4bp 6bp 4bp 7bp,clip]{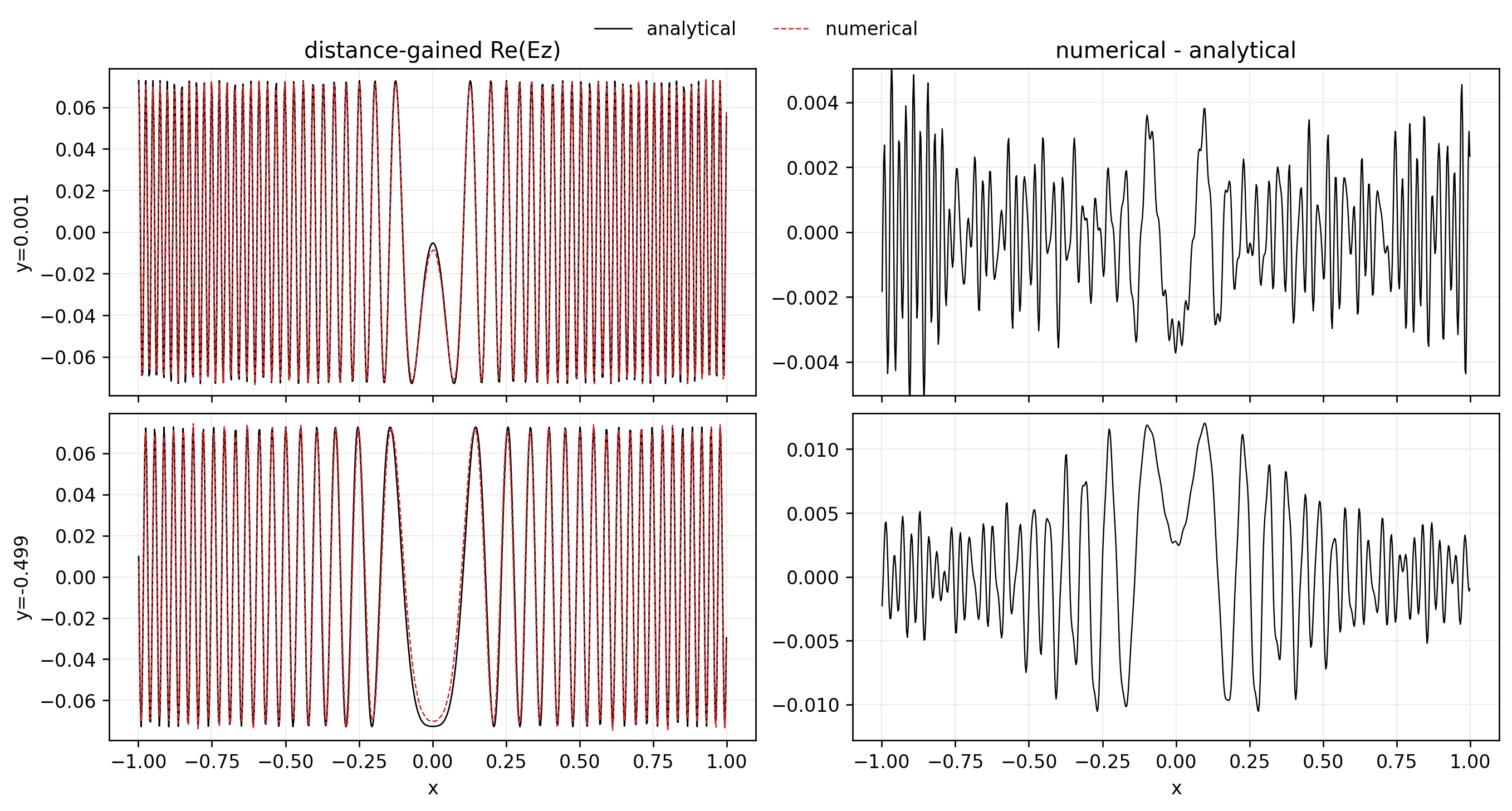}
		\caption{Receiver-line validation on the $768^3$ total mesh.  The left
			panels compare distance-gained numerical and analytical
			$\operatorname{Re}(E_z)$; the right panels show their difference.}
		\label{fig:pointsource-1d}
	\end{figure}
		
	\subsection{Stopping tolerance}
	
	The Green-tensor error contains both discretization error and algebraic error.
	To determine when the latter becomes negligible, we solve the same $768^3$
	problem with outer tolerances $10^{-2}$, $10^{-3}$, $10^{-4}$, and $10^{-5}$,
	changing no other parameter.  Table~\ref{tab:pointsource-tolerance} reports the
	relative error on the receiver line $y\approx0$, $z=z_s$, together with the
	change in that profile relative to the $10^{-5}$ solution.  Timings are omitted
	because the purpose here is to select an accuracy threshold, not to compare
	implementations.
	
	\begin{table}[H]
		\centering
		\caption{Tolerance study on the homogeneous point-source test.}
		\begin{tabular}{ccc}
			\toprule
			outer tol. & profile error & change vs. $10^{-5}$ \\
			\midrule
			$10^{-2}$ & $3.4800\times 10^{-2}$ & $1.6005\times 10^{-2}$ \\
			$10^{-3}$ & $3.6370\times 10^{-2}$ & $5.5525\times 10^{-3}$ \\
			$10^{-4}$ & $3.6959\times 10^{-2}$ & $3.0800\times 10^{-4}$ \\
			$10^{-5}$ & $3.6989\times 10^{-2}$ & 0 \\
			\bottomrule
		\end{tabular}
		\label{tab:pointsource-tolerance}
	\end{table}

	\begin{figure}[H]
		\centering
		\includegraphics[width=0.90\textwidth,trim=8bp 15bp 5bp 0,clip]{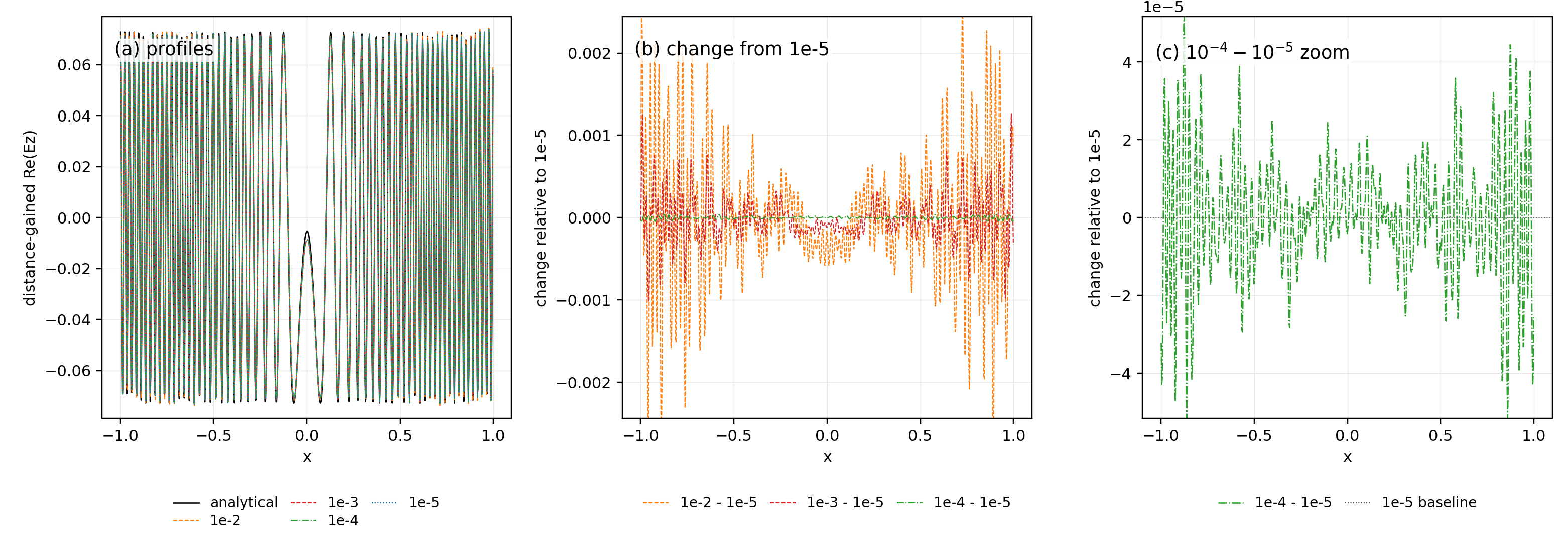}
		\caption{Tolerance effect on the source-crossing receiver profile.
			Panel (a) compares the analytical profile and all numerical profiles.
			Panel (b) subtracts the $10^{-5}$ solve.  Panel (c) zooms in on the
			$10^{-4}$--$10^{-5}$ difference.}
		\label{fig:pointsource-tolerance}
	\end{figure}
	\FloatBarrier
	
	At a tolerance of $10^{-3}$, the receiver-line error is
	$3.6370\times10^{-2}$, compared with $3.6989\times10^{-2}$ for the $10^{-5}$
	solution, and the profile changes by only $5.5525\times10^{-3}$.  Further
	reduction of the algebraic residual therefore has little effect on the field at
	the accuracy delivered by the present discretization and source treatment.  We
	use $10^{-3}$ in the subsequent performance experiments to avoid oversolving the
	discrete system.
	
	\subsection{Single-precision validation}
	
	Having fixed the outer tolerance, we compare complex single and double precision
	on the same $768^3$ point-source problem.  The discrete operator, source, PML,
	preconditioner, and stopping criterion are unchanged.  Only the arithmetic
	precision differs.
	Because the double-precision hierarchy does not fit on eight 40~GiB GPUs, the
	two runs both use 16 A100-40G GPUs.
	
	\begin{table}[H]
		\centering
		\caption{Single- and double-precision comparison for the homogeneous
			point-source problem.  The solution difference is measured over the full discrete
			solution vector, using the double-precision solution as the reference.}
		\begin{tabular}{cccc}
			\toprule
			precision & PC calls & $\|A_h\bm U_h-\bm F_h\|_2/\|\bm F_h\|_2$
			& solution difference \\
			\midrule
			single & 30 & $8.5032\times 10^{-4}$ & $3.84\times 10^{-5}$ \\
			double & 30 & $8.5031\times 10^{-4}$ & -- \\
			\bottomrule
		\end{tabular}%
		\label{tab:precision-check}
	\end{table}
	
	Both precisions require 30 preconditioner applications and attain the same true
	residual.  Their full-vector relative difference is
	$3.84\times10^{-5}$, well below the selected algebraic tolerance.  Complex
	single precision therefore preserves the numerical result and is used in the
	remaining GPU experiments.
	
	\subsection{CPU--GPU comparison}
	
	We finally compare the fastest CPU and GPU implementations developed in this
	work for the same discretized Maxwell problem.  Both use the lowest-order
	Nedelec space, blended quadrature, complex single precision, the same three-grid
	preconditioner, and the same outer stopping criterion.  The low-level operator
	representation is allowed to differ so that neither architecture is penalized by
	a kernel designed for the other.
	
	The CPU implementation used in the comparison is a stored row-stencil PETSc
	MatShell.  It does not form a generic PETSc AIJ matrix, but it explicitly stores
	the coefficients of every owned edge row and the halo columns required by that
	row; it is therefore a stored-coefficient, rather than a strict matrix-free,
	operator.  We implemented and timed both this version and a strict
	element-by-element matrix-free CPU MatShell.  The matrix-free version was
	substantially slower and was stopped before completion.  Its cost comes from reconstructing the local
	action and scattering contributions into shared edge unknowns at every operator
	application.  Storing the row stencil removes this repeated work at the price of
	a larger memory footprint.  The CPU results therefore use the faster
	stored-coefficient implementation.
	
	The corresponding trade-off is different on GPUs.  A cached Maxwell edge
	stencil requires streaming many complex coefficients from global memory for each
	unknown and is bandwidth limited.  The element-based GPU kernel instead retains
	the compact local N{\'e}delec data and recomputes the element action, using only a
	limited number of local atomic additions.  Its smaller coefficient traffic and
	higher arithmetic intensity make the element representation faster on the GPU\@.
	Table~\ref{tab:cpu-gpu-pointsource} therefore compares the fastest tested
	operator representation on each architecture: a stored row stencil on CPUs and
	an element-based matrix-free operator on GPUs.
	
	\begin{table}[H]
		\centering
		\caption{Architecture-optimized CPU--GPU comparison for the homogeneous
			point-source test.  The table reports the fastest tested implementation on each
			platform: stored row-stencil MatShell on CPUs and element-based matrix-free
			apply on GPUs for the $768^3$ problem.  The CPU run uses AMD EPYC 7452 nodes
			with two CPU cores per MPI rank, and the GPU run uses eight A100-40G GPUs.  The
			memory column is the total peak memory over all CPUs or GPUs.}
		\resizebox{\textwidth}{!}{%
			\begin{tabular}{cccccccccc}
				\toprule
				platform & hardware & operator representation & mesh & resources & PC calls
				& $\|A_h\bm U_h-\bm F_h\|_2/\|\bm F_h\|_2$ & setup & solve & peak memory \\
				\midrule
			GPU & NVIDIA A100-40G & element matrix-free & $768^3$ & 8 GPUs & 30
			& $8.24\times 10^{-4}$ & 0.221 s & 17.668 s & 296.8 GiB \\
			CPU & AMD EPYC 7452 & stored-stencil MatShell & $768^3$
			& 24 nodes, 768 MPI ranks $\times$ 2 threads & 30
			& $8.41\times 10^{-4}$ & 11.88 s & 727.86 s & 2101.61 GiB \\
				\bottomrule
			\end{tabular}
		}
		\label{tab:cpu-gpu-pointsource}
	\end{table}
	
	The CPU and GPU runs require the same number of preconditioner applications and
	reach essentially the same true residual, confirming that the two implementations
	realize the same algebraic algorithm.  The CPU run binds one MPI rank to two cores
	and uses 768 MPI ranks.  Relative to this baseline, the eight-GPU solve is
	$41.2$ times faster and uses $7.1$ times less peak memory.  These numbers compare
	the fastest tested realization on each architecture; they do not
	imply that either the element or row-stencil representation is universally
	preferable.
	
	\section{Large-Scale GPU Experiments}
	\label{sec:performance}
	
	We consider four three-dimensional Maxwell problems: a homogeneous reference
	medium, a smooth converging lens, a rapidly
	oscillatory periodic medium, and a three-dimensionally varying random isotropic
	medium.  For each medium, we increase the total mesh from $384^3$ on one NVIDIA
	A100-40G GPU to $1536^3$ on 64 GPUs while keeping the spatial resolution and PML
	thickness fixed.  The physical propagation distance therefore grows with the
	mesh, and the largest run contains approximately $10.89$ billion complex edge
	unknowns.  We record the preconditioner applications, matrix-free setup time,
	solve time, time per preconditioner application, and total peak GPU memory for
	every problem size.

	We then hold the mesh and coefficient field fixed and vary only the number of
	GPUs.  Strong scaling is measured on the $384^3$, $768^3$, and $1224^3$
	problems, whereas the volume-matched runs at $384^3/1$, $768^3/8$, and
	$1536^3/64$ form a weak-scaling sequence.  Together, these experiments cover
	smooth propagation, focusing, repeated scattering, and fully three-dimensional
	coefficient variation with the same discretization and preconditioner
	configuration.  The common experimental protocol is specified next.
	
	\subsection{Experimental setup}
	\label{sec:experimental-protocol}
	
	All performance results use one discretization and one solver configuration;
	no parameter is retuned for a particular medium, mesh, or GPU count.  Thus,
	differences in the preconditioner count directly reflect changes in propagation
	distance and material structure, rather than case-dependent solver choices.
	Section~5 independently justifies the selected outer tolerance and arithmetic
	precision: tightening the relative residual below $10^{-3}$ has negligible
	effect on the receiver profiles, while complex single precision preserves the
	preconditioner count and produces negligible change in the computed field.
	Every total mesh includes twelve PML cells on each side.
	
	All runs use the same matrix-free CUDA and MPI implementation with a two-dimensional
	$y$--$z$ pencil decomposition; only nearest-neighbor halo data and Krylov
	reductions are communicated.  To control placement variability, each scaling
	sequence uses prefix-nested allocations from a fixed rank-to-node ordering, and
	one run of each configuration is reported.
	
		\begin{table}[H]
		\centering
		\small
		\renewcommand{\arraystretch}{0.90}
		\caption{Fixed discretization and three-grid solver parameters used in the performance experiments.}
		\begin{tabular}{@{}p{0.34\linewidth}p{0.56\linewidth}@{}}
			\toprule
			item & setting \\
			\midrule
			finite element space & lowest-order N{\'e}delec elements on hexahedra \\
			quadrature & blended rule for both fine and coarse operators \\
			mesh convention & total mesh includes the PML cells \\
			resolution & minimum physical-domain $\mathrm{ppw}=8$, $n_{\rm pml}=12$ \\
			target operator & unshifted Maxwell matrix $A_h$ \\
			outer Krylov method & FGMRES(4) applied to $A_h\bm U_h=\bm F_h$ \\
			outer stopping test & $\|A_h\bm U_h-\bm F_h\|_2/\|\bm F_h\|_2 \le 10^{-3}$ \\
			shifted hierarchy & shifted Maxwell operators $B_{2h}$ and $B_{4h}$ with $\sigma=0.4$ \\
			fine-grid smoother & fixed pre- and post-GMRES(4)--Jacobi(2), $\omega_h^{\mathrm{J}}=0.55$ \\
			$2h$ coarse equation & four fixed inner FGMRES iterations on the unshifted $A_{2h}$ equation, restart length 10 \\
			$2h$ inner preconditioner & shifted V-cycle using GMRES(4)--Jacobi(2) smoothing on $B_{2h}$, $\omega_{2h}^{\mathrm{J}}=0.55$ \\
			$4h$ bottom correction & fixed GMRES(10)--Jacobi(2) applied to $B_{4h}$, $\omega_{4h}^{\mathrm{J}}=0.4$ \\
			precision & complex single precision \\
			\bottomrule
		\end{tabular}
		\label{tab:performance-protocol}
	\end{table}
	
	Here outer FGMRES(4) has restart length four, whereas
	GMRES($m$)--Jacobi(2) denotes $m$ fixed GMRES steps, each preconditioned by two
	damped Jacobi sweeps.  In the notation of Section~3, the fixed work counts are
	\[
		m_h=4,\quad \nu_h=2,\quad m_{\rm mid}=4,\quad
		m_{2h}=4,\quad \nu_{2h}=2,\quad
		m_{4h}=10,\quad \nu_{4h}=2.
	\]
	The pre- and post-smoothing calls on a given level use the same listed budget.
	Shifts appear only in the auxiliary hierarchy;
	the outer FGMRES iteration, stopping criterion, and reported residual always
	use the unshifted physical operator $A_h$.
	
	The homogeneous problem uses the $z$-oriented N{\'e}delec point load from the
	Green-tensor validation.  The heterogeneous problems use the $z$-oriented
	source of the Maxwell sweeping experiments \cite{Tsuji2012}, centered at
	$(0,0.5,0)$.  The mesh/resource pairs are $384^3/1$, $768^3/8$,
	$1224^3/32$, and $1536^3/64$, where the second number denotes the GPU count.
	These pairs keep the fine-grid volume per GPU fixed to within $1.2\%$.  We
	therefore report the solve-only weak-scaling efficiency based on the time per
	preconditioner application, equivalently the per-iteration cost of outer FGMRES,
	\[
		\eta_{\rm weak}(N,n_{\mathrm{GPU}})
		=
		\frac{(T_{\rm solve}/n_{\rm PC})_{384^3,1}}
		{(T_{\rm solve}/n_{\rm PC})_{N^3,n_{\mathrm{GPU}}}},
	\]
	where $N$ is the total mesh size in each coordinate direction,
	$n_{\mathrm{GPU}}$ is the GPU count, and $n_{\rm PC}$ is the number of
	preconditioner applications.  The
		matrix-free hierarchy requires neither sparse matrix assembly nor a direct
		factorization.  Setup builds the coefficient and diagonal data together with
		transfer and communication metadata, and remains below one second in every
		experiment.

	\begin{table}[t]
		\centering
		\footnotesize
		\setlength{\tabcolsep}{2.6pt}
		\renewcommand{\arraystretch}{0.92}
		\caption{Mesh and resource scaling for all four media on NVIDIA A100-40G
		GPUs.  Weak-scaling efficiency is computed from $T_{\rm solve}$/PC relative
		to the $384^3$ one-GPU case for each medium; memory is the total peak over
		all GPUs.}
		\begin{tabular}{@{}llccccccc@{}}
			\toprule
			medium & mesh & PC calls & GPUs & $T_{\rm setup}$ & $T_{\rm solve}$ &
			$T_{\rm solve}$/PC & $\eta_{\rm weak}$ & memory \\
			\midrule
			homogeneous & $384^3$  & 18 & 1  & 0.044 s & 9.582 s  & 0.532 s & 100.0\% & 37.1 GiB \\
			homogeneous & $768^3$  & 30 & 8  & 0.221 s & 17.668 s & 0.589 s & 90.4\%  & 296.8 GiB \\
			homogeneous & $1224^3$ & 44 & 32 & 0.335 s & 31.692 s & 0.720 s & 73.9\%  & 1204.6 GiB \\
			homogeneous & $1536^3$ & 55 & 64 & 0.402 s & 42.047 s & 0.764 s & 69.6\%  & 2381.4 GiB \\
			\hdashline\noalign{\vskip 2pt}
			lens & $384^3$  & 23 & 1  & 0.082 s & 12.429 s & 0.540 s & 100.0\% & 37.1 GiB \\
			lens & $768^3$  & 47 & 8  & 0.270 s & 28.009 s & 0.596 s & 90.7\%  & 296.8 GiB \\
			lens & $1224^3$ & 76 & 32 & 0.478 s & 52.598 s & 0.692 s & 78.1\%  & 1204.6 GiB \\
			lens & $1536^3$ & 98 & 64 & 0.492 s & 72.047 s & 0.735 s & 73.5\%  & 2381.4 GiB \\
			\hdashline\noalign{\vskip 2pt}
			periodic & $384^3$  & 21 & 1  & 0.149 s & 11.510 s & 0.548 s & 100.0\% & 37.1 GiB \\
			periodic & $768^3$  & 35 & 8  & 0.292 s & 21.450 s & 0.613 s & 89.4\%  & 296.8 GiB \\
			periodic & $1224^3$ & 56 & 32 & 0.486 s & 39.564 s & 0.706 s & 77.6\%  & 1204.6 GiB \\
			periodic & $1536^3$ & 70 & 64 & 0.501 s & 54.652 s & 0.781 s & 70.2\%  & 2381.4 GiB \\
			\hdashline\noalign{\vskip 2pt}
			random & $384^3$  & 23 & 1  & 0.412 s & 13.092 s & 0.569 s & 100.0\% & 37.1 GiB \\
			random & $768^3$  & 38 & 8  & 0.688 s & 23.774 s & 0.626 s & 91.0\%  & 296.8 GiB \\
			random & $1224^3$ & 57 & 32 & 0.745 s & 40.449 s & 0.710 s & 80.2\%  & 1204.6 GiB \\
			random & $1536^3$ & 71 & 64 & 0.750 s & 55.178 s & 0.777 s & 73.2\%  & 2381.4 GiB \\
			\bottomrule
		\end{tabular}
		\label{tab:a100-mesh-weak-scaling}
	\end{table}
		
	\subsection{Homogeneous medium}
	
		The homogeneous sequence gives the cleanest measure of algorithmic scaling with
		propagation distance.  Because the points per wavelength are fixed, increasing
		the mesh from $384^3$ to $1536^3$ increases the domain diameter measured in
		wavelengths by approximately a factor of four without changing the local
		discretization quality.

		The homogeneous rows of Table~\ref{tab:a100-mesh-weak-scaling} show controlled
		growth of the preconditioner count: a fourfold increase in propagation distance
		requires only about a threefold increase, from 18 to 55 applications.  Along the
		volume-matched resource sequence, the time per application rises from $0.532$ to
		$0.764$ seconds.  The $1536^3$ system is consequently solved in $42.0$ seconds
		on 64 GPUs, while construction of the matrix-free hierarchy takes $0.40$ seconds.
	\subsection{Converging lens}
	
	The converging lens, adapted from the Maxwell sweeping experiments
	\cite{Tsuji2012}, is a smooth but strongly varying medium that bends and focuses
	the propagating field.  It is the most demanding of the four examples because
	the coarse hierarchy must represent both a spatially varying local wavelength
	and strongly refracted propagation paths.  The material parameters are
	\[
	\epsilon_r(\bm{x})=\mu_r(\bm{x})
	=
	\left[
	\frac{3}{4}
	\left(1-\frac{1}{2}
	\exp\{-8(x^2+y^2+z^2)\}\right)
	\right]^{-1}.
	\]
	The source location and orientation follow \cite{Tsuji2012}.

	Even for this most demanding medium, the $1536^3$ problem remains below 100
	preconditioner applications and is solved in $72.0$ seconds.  Its $0.735$ seconds
	per application is close to the homogeneous value of $0.764$
	seconds: refraction increases the number of coarse-grid corrections, but not the
	throughput of the matrix-free kernels.  The left panel of
	Figure~\ref{fig:heterogeneous768-wavefields} shows the
		physical complexity of this test.  The nearly
		spherical wavefronts emitted by the off-center source are refracted as they
		enter the lens, compressed near the symmetry axis, and reorganized into a
		pronounced downstream focusing pattern.  Although the strongest coefficient
		variation is concentrated near the lens, its effect persists throughout the
		downstream wavefield.

	\subsection{Periodic medium}
	
	The periodic medium from \cite{Tsuji2012} replaces the localized smooth lens by
	rapidly oscillating coefficients throughout the domain.  It subjects the coarse
	hierarchy to repeated scattering, interference, and continuously varying local
	element operators.
	Define
	\[
	m_{\rm per}(x,y)
	=
	1
	+\frac{1}{4}
	\cos\left(20\left(\frac{x}{\sqrt{2}}+\frac{y}{\sqrt{2}}\right)\right)
	+\frac{1}{4}
	\cos\left(20\left(\frac{x}{\sqrt{2}}-\frac{y}{\sqrt{2}}\right)\right),
	\]
	and set
	\[
	\epsilon_r(\bm{x})=\mu_r(\bm{x})=\sqrt{m_{\rm per}(x,y)}.
	\]
	The medium is invariant in the $z$ direction, and the source is the same as in
	the lens problem.

	Repeated scattering produces only a moderate increase in numerical difficulty.
	The $1536^3$ system converges in 70 applications, compared with 55 for the
	homogeneous medium, and is solved in $54.7$ seconds.  At every mesh size, its time
	per application differs by at most $4.1\%$ from the homogeneous value.  The center
	panel of Figure~\ref{fig:heterogeneous768-wavefields}
		shows that the outgoing wavefronts are no longer nearly spherical.  The periodic
		coefficient produces oblique bands of enhanced amplitude and repeated phase
		modulation across the source-crossing planes, revealing the preferred scattering
		directions created by the two rotated coefficient modes.

	\begin{figure}[!b]
		\centering
		\includegraphics[width=0.32\textwidth,trim=10bp 18bp 0 30bp,clip]{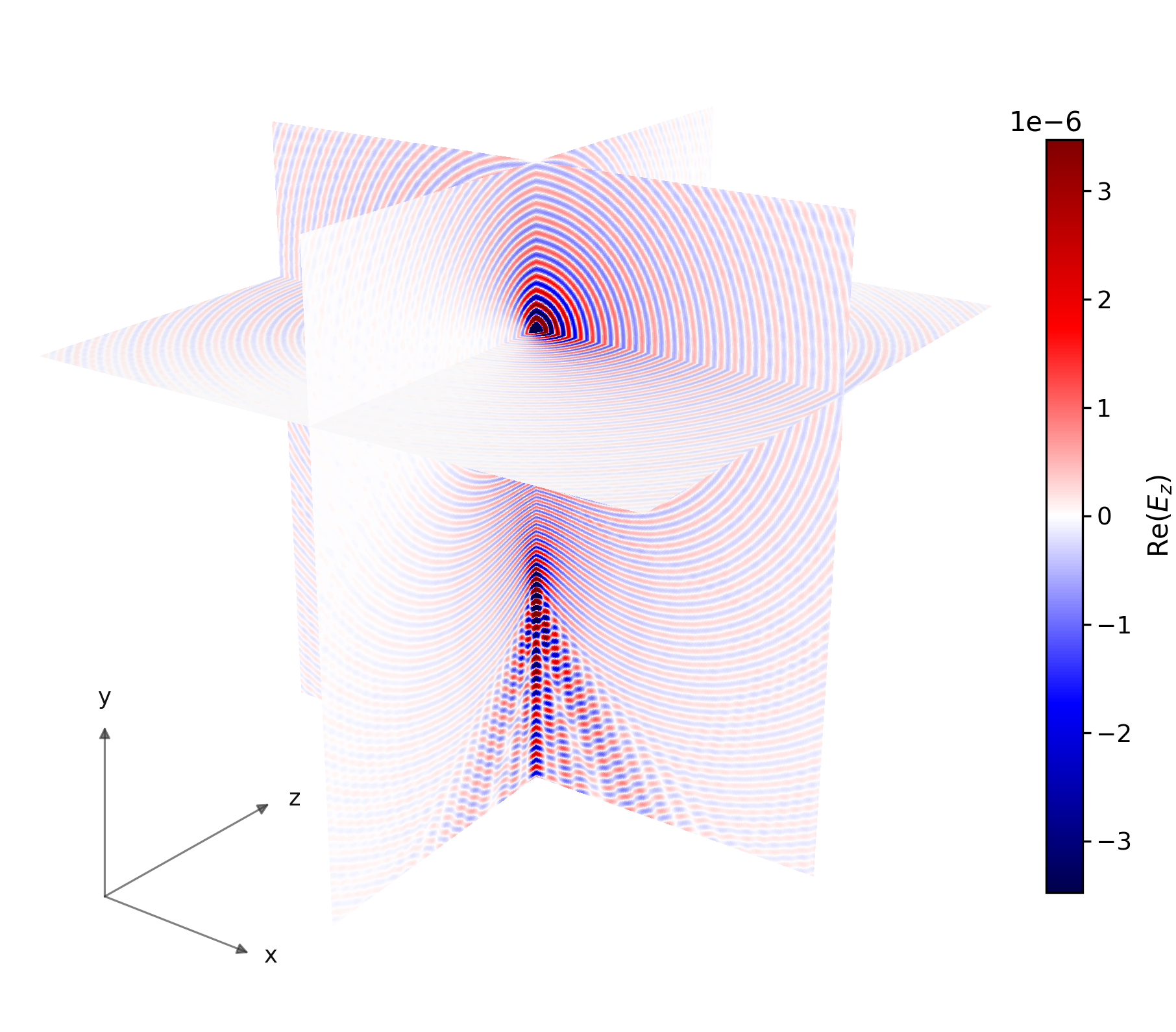}\hfill
		\includegraphics[width=0.32\textwidth,trim=10bp 18bp 0 30bp,clip]{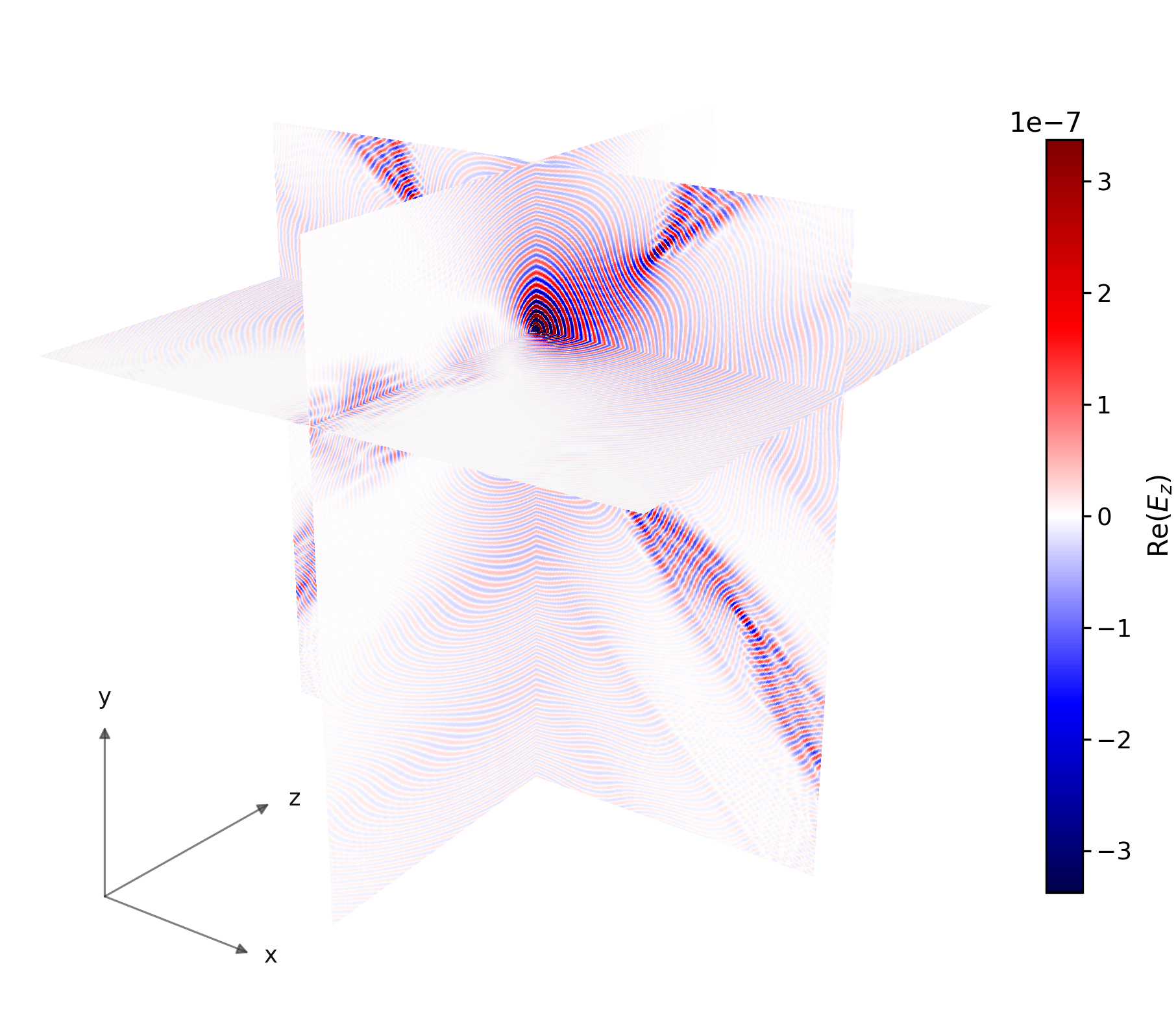}\hfill
		\includegraphics[width=0.32\textwidth,trim=10bp 18bp 0 30bp,clip]{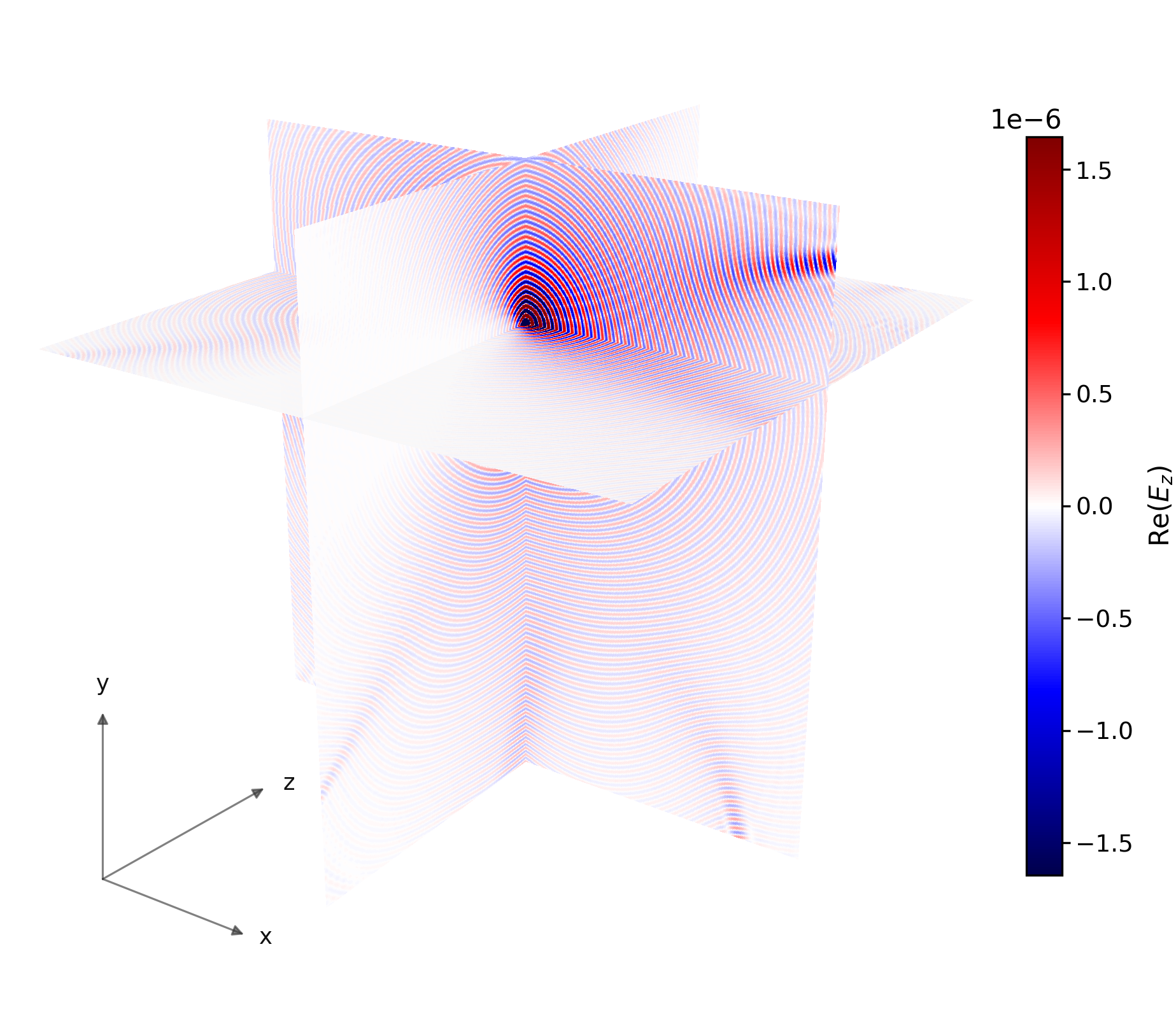}
		\caption{Source-crossing views of $\operatorname{Re}(E_z)$ on $768^3$ total meshes:
		converging lens (left), periodic medium (center), and random isotropic medium
		(right).  Each panel uses one common color scale for its three planes; PML cells
		are omitted.}
		\label{fig:heterogeneous768-wavefields}
	\end{figure}

	\subsection{Random isotropic medium}
	
	The random medium removes both the radial structure of the lens and
	the translational structure of the periodic medium.  It provides the strongest
	check that the observed performance does not depend on symmetry, separability,
	or a repeating coefficient pattern.  Its deterministic three-dimensional
	coefficient field is defined by
	\[
	\begin{aligned}
		\rho(\bm{x})={}&
		0.71\cos(2\pi(0.50x+y-0.25z)+0.37)\\
		&+0.53\sin(2\pi(1.25x-0.50y+0.75z)+1.11)\\
		&+0.47\cos(2\pi(-0.75x+1.50y+0.50z)+2.23)\\
		&+0.41\sin(2\pi(1.50x+0.25y-z)+3.17)\\
		&+0.37\cos(2\pi(0.25x-1.25y+1.50z)+4.03)\\
		&+0.31\sin(2\pi(-1.50x+0.75y+1.25z)+5.19)\\
		&+0.29\cos(2\pi(x+y+z)+0.83)\\
		&+0.23\sin(2\pi(-1.25x-0.75y+0.50z)+2.71).
	\end{aligned}
	\]
	We set
	\[
	\chi(\bm{x}) = 0.5 + 0.45\, \rho(\bm{x})/3.32,
	\qquad
	\epsilon_r(\bm{x})=\mu_r(\bm{x})
	=\frac{4/3}{1-\chi(\bm{x})/2}.
	\]
	The source is unchanged from the preceding heterogeneous examples.

	The fully three-dimensional coefficient field requires 23--71 preconditioner
	applications over the four meshes and is handled almost as efficiently as the
	periodic medium.  The $1536^3$ problem is solved in $55.2$ seconds.  This close
	agreement shows that the method does not rely on radial symmetry, separability,
	or periodic repetition of the material distribution.  In the right panel of
	Figure~\ref{fig:heterogeneous768-wavefields}, the
		globally outgoing field is retained, but the wavefront spacing, curvature, and
		amplitude vary from one region to another.  These localized distortions on all
		three planes are the visible signature of a genuinely three-dimensional
		coefficient field rather than a two-dimensional extrusion.

	Across all four media, the weak-scaling efficiency is $89.4$--$91.0\%$ on
	eight GPUs, $73.9$--$80.2\%$ on 32 GPUs, and $69.6$--$73.5\%$ on 64 GPUs.
	
	\FloatBarrier
	\subsection{Strong scaling}

	Strong scaling is reproducible across all four coefficient fields.
	Only the GPU count is varied; the
	discretization, physical data, stopping criterion, and all three-grid parameters
	remain fixed.  The preconditioner count is unchanged in most comparisons and
	differs by at most one application when the true residual crosses the fixed
	stopping threshold.  The timing ratios therefore measure the parallel execution
	of nearly identical numerical work rather than a retuned solver.
	Parallel efficiency is computed from solve time alone.  With
	$n_{\mathrm{GPU},0}$ denoting the baseline allocation, we use
	\[
		\eta_{\rm strong}(n_{\mathrm{GPU}})
		=
		\frac{n_{\mathrm{GPU},0}T_{\rm solve}(n_{\mathrm{GPU},0})}
		{n_{\mathrm{GPU}}T_{\rm solve}(n_{\mathrm{GPU}})}.
	\]
	The baselines are one GPU for the $384^3$ mesh, eight GPUs for $768^3$, and
	32 GPUs for $1224^3$.

	\begin{table}[H]
		\centering
		\caption{Strong scaling on a fixed $384^3$ total mesh using A100-40G GPUs.}
		\resizebox{\textwidth}{!}{%
		\begin{tabular}{cccccccc}
			\toprule
			medium & mesh & PC calls & GPUs & $T_{\rm setup}$ & $T_{\rm solve}$ & $\eta_{\rm strong}$ & memory \\
			\midrule
			homogeneous & $384^3$ & 18 & 1 & 0.044 s & 9.582 s & 100.0\% & 37.1 GiB \\
			homogeneous & $384^3$ & 18 & 2 & 0.044 s & 5.114 s & 93.7\% & 37.4 GiB \\
			homogeneous & $384^3$ & 18 & 4 & 0.051 s & 3.253 s & 73.6\% & 37.5 GiB \\
			\hdashline\noalign{\vskip 2pt}
			lens        & $384^3$ & 23 & 1 & 0.082 s & 12.429 s & 100.0\% & 37.1 GiB \\
			lens        & $384^3$ & 23 & 2 & 0.085 s & 6.654 s & 93.4\% & 37.4 GiB \\
			lens        & $384^3$ & 23 & 4 & 0.090 s & 4.146 s & 74.9\% & 37.5 GiB \\
			\hdashline\noalign{\vskip 2pt}
			periodic    & $384^3$ & 21 & 1 & 0.149 s & 11.510 s & 100.0\% & 37.1 GiB \\
			periodic    & $384^3$ & 21 & 2 & 0.147 s & 6.137 s & 93.8\% & 37.4 GiB \\
			periodic    & $384^3$ & 21 & 4 & 0.157 s & 3.874 s & 74.3\% & 37.5 GiB \\
			\hdashline\noalign{\vskip 2pt}
			random      & $384^3$ & 23 & 1 & 0.412 s & 13.092 s & 100.0\% & 37.1 GiB \\
			random      & $384^3$ & 23 & 2 & 0.411 s & 7.024 s & 93.2\% & 37.4 GiB \\
			random      & $384^3$ & 23 & 4 & 0.417 s & 4.546 s & 72.0\% & 37.5 GiB \\
			\bottomrule
		\end{tabular}%
		}
		\label{tab:a100-strong-scaling-384}
	\end{table}

	\begin{table}[H]
		\centering
		\caption{Strong scaling on a fixed $768^3$ total mesh using A100-40G GPUs.}
		\resizebox{\textwidth}{!}{%
		\begin{tabular}{cccccccc}
			\toprule
			medium & mesh & PC calls & GPUs & $T_{\rm setup}$ & $T_{\rm solve}$ & $\eta_{\rm strong}$ & memory \\
			\midrule
			homogeneous & $768^3$ & 30 & 8  & 0.221 s & 17.668 s & 100.0\% & 296.8 GiB \\
			homogeneous & $768^3$ & 30 & 16 & 0.237 s & 11.115 s & 79.5\% & 299.0 GiB \\
			homogeneous & $768^3$ & 30 & 32 & 0.277 s & 7.715 s & 57.3\% & 300.2 GiB \\
			\hdashline\noalign{\vskip 2pt}
			lens        & $768^3$ & 47 & 8  & 0.270 s & 28.009 s & 100.0\% & 296.8 GiB \\
			lens        & $768^3$ & 47 & 16 & 0.293 s & 17.485 s & 80.1\% & 299.0 GiB \\
			lens        & $768^3$ & 47 & 32 & 0.317 s & 11.446 s & 61.2\% & 300.2 GiB \\
			\hdashline\noalign{\vskip 2pt}
			periodic    & $768^3$ & 35 & 8  & 0.292 s & 21.450 s & 100.0\% & 296.8 GiB \\
			periodic    & $768^3$ & 35 & 16 & 0.414 s & 13.299 s & 80.6\% & 299.0 GiB \\
			periodic    & $768^3$ & 35 & 32 & 0.377 s & 8.715 s & 61.5\% & 300.2 GiB \\
			\hdashline\noalign{\vskip 2pt}
			random      & $768^3$ & 38 & 8  & 0.688 s & 23.774 s & 100.0\% & 296.8 GiB \\
			random      & $768^3$ & 38 & 16 & 0.747 s & 14.390 s & 82.6\% & 299.0 GiB \\
			random      & $768^3$ & 38 & 32 & 0.609 s & 9.845 s & 60.4\% & 300.2 GiB \\
			\bottomrule
		\end{tabular}%
		}
		\label{tab:a100-strong-scaling-768}
	\end{table}

	\begin{table}[H]
		\centering
		\caption{Strong scaling on a fixed $1224^3$ total mesh using A100-40G GPUs.}
		\resizebox{\textwidth}{!}{%
		\begin{tabular}{cccccccc}
			\toprule
			medium & mesh & PC calls & GPUs & $T_{\rm setup}$ & $T_{\rm solve}$ & $\eta_{\rm strong}$ & memory \\
			\midrule
			homogeneous & $1224^3$ & 44 & 32 & 0.335 s & 31.692 s & 100.0\% & 1204.6 GiB \\
			homogeneous & $1224^3$ & 44 & 64 & 0.332 s & 19.707 s & 80.4\% & 1211.8 GiB \\
			\hdashline\noalign{\vskip 2pt}
			lens        & $1224^3$ & 76 & 32 & 0.478 s & 52.598 s & 100.0\% & 1204.6 GiB \\
			lens        & $1224^3$ & 76 & 64 & 0.373 s & 32.948 s & 79.8\% & 1211.8 GiB \\
			\hdashline\noalign{\vskip 2pt}
			periodic    & $1224^3$ & 56 & 32 & 0.486 s & 39.564 s & 100.0\% & 1204.6 GiB \\
			periodic    & $1224^3$ & 56 & 64 & 0.447 s & 25.346 s & 78.0\% & 1211.8 GiB \\
			\hdashline\noalign{\vskip 2pt}
			random      & $1224^3$ & 57 & 32 & 0.745 s & 40.449 s & 100.0\% & 1204.6 GiB \\
			random      & $1224^3$ & 57 & 64 & 0.693 s & 25.815 s & 78.3\% & 1211.8 GiB \\
			\bottomrule
		\end{tabular}%
		}
		\label{tab:a100-strong-scaling-1224}
	\end{table}
	\FloatBarrier

	Strong scaling is consistent across all four media.  The $384^3$ problems retain
	$93.2\%$--$93.8\%$ efficiency on two GPUs and $72.0\%$--$74.9\%$ on four;
	across nodes, the $768^3$ and $1224^3$ problems retain $79.5\%$--$82.6\%$ and
	$78.0\%$--$80.4\%$, respectively, when the baseline allocation is doubled.
	The agreement among the media shows that communication, rather than coefficient
	complexity, governs the parallel behavior.

	With fixed fine-grid volume per GPU, the $768^3$ and $1536^3$ runs achieve
	$89.4\%$--$91.0\%$ and $69.6\%$--$73.5\%$ efficiency relative to $384^3$ on one
	GPU.  The remaining loss at 64 GPUs comes from halo exchange and global Krylov
	reductions.  Because the cluster neither guarantees topology-aware placement nor
	provides direct GPU halo transfers, these cross-node measurements are conservative.

	\FloatBarrier
	\section{Conclusions}
	
	We developed a fully matrix-free three-grid preconditioner that confines complex
	shifting to an auxiliary inner cycle while preserving the physical Maxwell
	operators.  Compatible edge transfers, blended quadrature, and fixed-work
	smoothers eliminate assembled matrices and a direct coarse solve, while the Fourier
	analysis and Green-tensor experiment support the parameter and accuracy choices.
	One configuration handles all four media and scales to $10.89$ billion complex
	unknowns, solving the largest systems in $42.0$--$72.0$ seconds on only 64
	A100-40G GPUs.  This combination of scale, speed, and modest resources is the
	central computational result.
	
	\bibliographystyle{siamplain}
	\bibliography{ref}
	
\end{document}